\newcommand\gfu[1]{\textcolor{black}{#1}}
\newcommand\gfv[1]{\textcolor{black}{#1}}
\newtheorem{theorem}{Theorem}
\newtheorem{remark}{Remark}
\def\restrict#1{\raise-0.2ex\hbox{\ensuremath|}_{#1}}
\newcommand{\bld}[1]{\boldsymbol{#1}}
\newcommand{\Sh}{\bld{V}_{\!h}}
\newcommand{\Shd}{\bld{V}_{\!h,\mathsf{dg}}}
\newcommand{\divs}{{\nabla\cdot}}
\newcommand{\grads}{{\nabla}}
\newcommand{\pol}{\mathbb{P}}
\newcommand{\Oh}{{\mathcal{T}_h}}
\newcommand{\Eh}{\mathcal{F}_h}
\newcommand{\jmp}[1]{[\![#1 ]\!]}
\newcommand{\vertiii}[1]{{\left\vert\kern-0.25ex\left\vert\kern-0.25ex\left\vert #1
    \right\vert\kern-0.25ex\right\vert\kern-0.25ex\right\vert}}
\begin{document}
\title[Explicit divergence-free DG]{
An explicit divergence-free DG method for incompressible flow}
\author{Guosheng Fu}
\address{Division of Applied Mathematics, Brown University, 182 George St,
Providence RI 02912, USA.}
\email{Guosheng\_Fu@brown.edu}
% \thanks{The author gratefully acknowledges the partial support of this work
% under AFOSR contract FA9550-12-1-0399.}

\keywords{}
\subjclass{65N30, 65N12, 76S05, 76D07}

\begin{abstract}
We present an explicit divergence-free DG method for incompressible flow based 
on velocity formulation only. \gfu{An $H(\mathrm{div})$-conforming, and} globally divergence-free finite element space is used for the velocity field, 
and the pressure field is eliminated from the equations by design.
The resulting ODE system can be discretized using any explicit time stepping methods. We use the 
third order strong-stability preserving Runge-Kutta method in our numerical experiments.
Our spatial discretization produces the {\it identical} velocity field as 
the divergence-conforming DG method of Cockburn et al. \cite{CockburnKanschatSchotzau07} based on 
a velocity-pressure formulation,
when the same DG operators are used for the convective and viscous parts.
% \gfv{Special attention is paid to the efficient implementation of the fully-discrete scheme.}

Due to the global nature of the divergence-free constraint
\gfv{and its interplay with the boundary conditions, 
% there exist no 
it is very hard to construct
local bases for our finite element space}. 
\gfv{Here} we present a key result on the efficient implementation of the scheme 
by identifying the equivalence of the  mass matrix inversion of 
the globally divergence-free finite element
space to a standard (hybrid-)mixed Poisson solver. 
Hence, in each time step, a (hybrid-)mixed Poisson solver is used, which reflects the 
global nature of the 
incompressibility condition. 
In the actual implementation of this fully discrete scheme, the pressure field is 
also computed (via the hybrid-mixed Poisson solver). Hence, the scheme can be interpreted
as a velocity-pressure formulation that treat the incompressibility constraint 
\gfu{and pressure forces} implicitly, 
but the viscous and convective part explicitly.
Since we treat viscosity explicitly for the Navier-Stokes equation, our method shall be 
best suited for unsteady high-Reynolds number flows 
so that the CFL constraint is not too restrictive.
\end{abstract}
\maketitle

\section{Introduction}
\label{sec:intro}
It is highly desirable to have 
a velocity field that is point-wisely divergence-free (exactly mass conservation) for incompressible flows; see the recent review
article \cite{John17}. 

We propose a new explicit, high-order, divergence-free DG scheme for the unsteady 
incompressible Euler and Navier-Stokes equation based on a {\it solely} velocity formulation.
The pressure field and incompressibility constraint are eliminated from the equation by design.
Our semi-discrete scheme produce exactly the same velocity field 
as the divergence-conforming DG method of 
Cockburn et al. \cite{CockburnKanschatSchotzau07}. 
Hence, our scheme enjoys features such as global and local conservation properties, 
high-order accuracy, energy-stability, \gfv{and} pressure-robustness \cite{CockburnKanschatSchotzau07,Guzman17}.
% Also, the semi-discrete analysis in \cite{CockburnKanschatSchotzau07} for the viscous case and 
% \cite{Guzman17} for the inviscid case naturally carries over to our setting.

The resulting semi-discrete scheme is an ODE system for velocity only, 
as opposite to the differential-algebraic equations (DAE) in \cite{CockburnKanschatSchotzau07} 
where the pressure field and incompressibility-constraint enter into the equations directly.
As a consequence, we can apply any explicit time-stepping techniques to solve the ODE system.
\gfu{Our explicit fully-discrete scheme is also equivalence to the velocity-pressure formulation
\cite{CockburnKanschatSchotzau07} coupled with corresponding 
explicit treatments for the convective and viscous parts, and
implicit treatments for  the pressure forces and divergence-free constraint.
Such temporal treatment has already been briefly discussed in \cite[Section 3.2.1]{Lehrenfeld:10}.}

Within each time step, the mass matrix for the divergence-free finite element space 
shall be inverted.
\gfv{Due to the non-locality of the divergence-free constraint in the finite element space and 
its interplay with the boundary conditions, 
it is very hard, if possible,  to construct the local bases.
Here we consider alternative formulations for the efficient implementation of the 
fully-discrete scheme.
In particular, we either relax the 
divergence-free condition or the divergence-conformity condition
in the finite elements via proper Lagrange multipliers, which yields a mixed Poisson solver or a 
hybrid-mixed Poisson solver in each time stage.
% In these approaches, basis functions can be easily constructed, 
% especially in three dimensions.
The hybrid-mixed formulation is used in our numerical simulations.
}
% We present a key result showing the equivalence of 
% the mass matrix inversion of the globally divergence-free finite element
% space to a standard (hybrid-)mixed Poisson solver. 
% In the actual implementation of the fully discrete scheme, the
% hybrid-mixed Poisson solver also compute the pressure field. 
% Hence, our scheme can be interpreted
% as a velocity-pressure formulation that treat the incompressibility constraint implicitly, 
% but the viscous and convective part explicitly. \gfu{We point out that such temporal treatment has been briefly discussed 
% in Lehrenfeld's thesis \cite[Section 3.2.1]{Lehrenfeld:10}.}

We treat the viscosity term explicitly to avoid a Stokes solver. Hence, our scheme shall be 
applied to unsteady high-Reynolds number, \gfu{unresolved}  flows so that the CFL constraint is not 
too restrictive.
\gfu{Roughly speaking, when both convective and viscous terms are treated explicitly 
as in our scheme, 
the following time stepping restriction for stability is to be expected
\begin{align*}
 \Delta t \le \min\left\{c_C\frac{h}{k^2}\frac{1}{v_{\max}}, c_B\frac{h^2}{k^4}\frac{1}{\nu}\right\}, 
\end{align*}
where $\Delta t$ is the time step size, $h$ is the mesh size, $k$ is 
the polynomial degree in the finite elements, $v_{\max}$ is the maximal velocity magnitude, 
$\nu$ is the viscosity coefficient, and 
$c_B, c_C>0$ are the CFL stability constants for the convective and viscous parts, respectively.
If we denote the mesh Reynolds number $\mathrm{Re}_{h}$ as
\begin{align}
\label{reh}
  \mathrm{Re}_{h}: = \frac{v_{\max}h}{\nu\,k^2},
\end{align}
then the above time stepping restriction  becomes 
\begin{align}
\label{ext}
  \Delta t \le \min\left\{c_C, c_B  \mathrm{Re}_{h}\right\} \frac{h}{k^2}\frac{1}{v_{\max}}.
\end{align}
Hence, as long as the mesh Reynolds number $\mathrm{Re}_{h}\gg 1$ (unresolved flow), or 
$c_C \approx c_B\, \mathrm{Re}_{h}$ (slightly resolved flow), 
the explicit treatment of viscous term does not pose extra severe time-stepping 
restrictions besides the CFL constraint from the explicit convection treatment.}

\gfu{
On the other hand, when $\mathrm{Re}_{h}\ll 1$, i.e., when the flow is highly resolved, 
explicit treatment of the viscous term would not be efficient anymore.
In this case, we suggest to treat the viscous term implicitly
with a divergence-conforming hybridizable DG 
(HDG) method \cite{Lehrenfeld:10,LehrenfeldSchoberl16}.
Therein, various {\it stiffly accurate} operator-splitting 
time integration approaches were discussed, including  
{\it additive} decomposition methods like
IMplicit-EXplicit(IMEX) schemes
\cite{AscherRuuthSpiteri97,CalvoFrutosNovo01,KennedyCarpenter03}, 
{\it product} decomposition methods like the operator-integration-factor 
splittings \cite{Maday90}, and an operator-splitting modification of the 
fractional step method \cite{Glowinski03}.
}

% When targeting for steady or low-Reynolds number flows, we suggest to use the
% IMEX divergence-conforming HDG method \cite{Lehrenfeld:10,LehrenfeldSchoberl16}. 

Comparing with other schemes that treat viscosity explicitly,  
the computational cost of our scheme is comparable to the DG scheme based on a 
vorticity-stream function formulation \cite{LiuShu00} in two dimensions, and is 
a lot cheaper than the vorticity-vector potential formulation \cite{ELiu97} in three dimensions.
A significant computational saving per time step (one hybrid-mixed Poisson solver/step)
is achieved comparing with methods that treat viscosity term implicitly, e.g.
the IMEX divergence-conforming HDG scheme \cite{LehrenfeldSchoberl16} (one Stokes solver/step) or 
the projection methods \cite{Guermond06} ($d+1$ Poisson solver/step with $d$ the space dimension). 
Finally, we shall mention that boundary condition is easy to impose 
for our velocity-based formulation
(and for various mixed methods based on velocity-pressure formulations \cite{John17}),
while that consists one of the major bottlenecks for 
vorticity-based methods \cite{ELiu96} or projection methods \cite{Guermond06}.

The rest of the paper is organized as follows.
In Section 2, the explicit divergence-free DG scheme is introduced for the 
incompressible Euler equation, along with a key result on 
transforming the mass matrix inversion to a hybrid-mixed Poisson solver.
In Section 3, the scheme is extended to the incompressible Navier-Stokes equations.
Extensive numerical results in two dimensions are presented in Section 4.
Finally we conclude in Section 5.
% And we conclude in Section 4 with some benchmark tests in two dimensions.

\section{Euler equations}
\label{sec:model}
We consider the following incompressible Euler equations:
\begin{subequations}
\label{euler}
 \begin{align}
  \label{euler-1}
\partial_t \bld u +(\bld u\cdot\nabla)\bld u + \nabla p = &\; \bld f, && \text{ in }\Omega,
  \\
  \label{euler-2}
\divs \bld u = &\; 0, && \text{ in }\Omega,
  \\
  \label{euler-3}
 \bld u\cdot \bld n = &\; g, && \text{ on }\partial\Omega,
 \end{align}
\end{subequations}
with initial condition 
\[
\bld u(x,0) = \bld u_0(x)\quad \forall x\in \Omega,
\]
where $\bld u$ is the velocity and $p$ is the pressure, $\Omega\subset\mathbb{R}^d$(d=2,3) is a polygonal/polyhedral domain, 
and $\bld n$ is the outward normal direction on the domain boundary $\partial\Omega$.
The initial velocity $\bld u_0(x)$ is assumed to be divergence-free.
For simplicity, we assume no source/sink and no-flow boundary conditions, $\bld f=0$ and $g=0$.
The inflow/outflow boundary conditions
will be discussed at the end of this section.
% In particular, the calculation of lift and drag coefficients will be addressed.

% \subsection{Spatial discretization}
\subsection{Preliminaries}
Let $\Oh$ be a conforming simplicial triangulation of $\Omega$.
For any element $T \in\Oh$, we denote by $h_T$ its diameter and 
we denote by $h$ the maximum diameter over all mesh elements. 
Denote by $\Eh$ the set of facets of  $\Oh$, and by $\Eh^i=\Eh\backslash\partial\Omega$
the set of interior facets. 

\def\jump#1{[\![{#1}]\!]} % the jump of a function
\def\mean#1{\{\!\!\{{#1}\}\!\!\}} % the mean of a function

We denote the following set of finite element spaces:
\begin{subequations}
\label{space}
\begin{align}
\label{space-1}
\Shd^k : =&\; \prod_{T\in\Oh} [\pol^{k}(T)]^d,\\
% \{\bld v_T\in \prod_{T\in\Oh}[\pol^k(T)]^d, \;\;
% \jmp{\bld v_T\cdot\bld n}_F = 0 \;\;\forall F\in\Eh.\}\subset H_0(\mathrm{div},\Omega),\\
\label{space-2}
\Shd^{k,m} : =&\; \{\bld v\in \Shd^k, \;\;
\divs \bld v|_{T}\in \pol^m(T)\;\;\forall T\in\Oh.\},\\
\label{space-3}
\Sh^k : =&\; \{\bld v \in \Shd^k, \;\;
\jump{\bld v\cdot\bld n}_F = 0 \;\;\forall F\in\Eh.\}\subset H_0(\mathrm{div},\Omega),\\
\label{space-4}
\Sh^{k,m} : =&\; \{\bld v\in \Sh^k, \;\;
\divs \bld v\in \pol^m(T)\;\;\forall T\in\Oh.\},\\
\label{space-5}
Q_h^m : =&\; \left(\prod_{T\in\Oh} \pol^{m}(T)\right)\cap L_0^2(\Omega),\\
\label{space-6}
M_h^k : =&\; \prod_{F\in\Eh} \pol^{k}(F),
\end{align}
where the polynomial degree $k\ge 1$ and $-1\le m\le k-1$, and
$\jmp{\cdot}$ is the usual jump operator and $\pol^r$ the space of 
polynomials up to degree $r$ with the convention that $\pol^{-1}=\{0\}$.
Note that functions in $M_h^k$ are defined only on the mesh skeleton $\Eh$, which will be used in 
the hybrid-mixed Poisson solver.
\end{subequations}

Finally, we introduce the jump and average notation. 
Let $\bld \phi_h$ be any function in $\Shd^k$.
On each facet $F\in\Eh^i$ shared by two elements $K^-$ and $K^+$, we denote 
$(\bld \phi_h)^\pm|_F =\left.(\bld\phi_h)\right|_{K^\pm}$, and use
\begin{align}
 \label{avg-jmp2d}
\jump {\bld \phi_h}|_F  = \bld\phi_h^+\cdot\bld n^++ \bld\phi_h^-\cdot\bld n^-,\quad \quad
\mean {\bld \phi_h}|_F  = \frac12(\bld \phi_h^++ \bld \phi_h^-)
\end{align}
to denote the jump and the average of $\phi_h\in V_h^k$ on the facet $F$.

\subsection{Spatial discretization}
The divergence-free space $\Sh^{k,-1}$ shall be used in our DG formulation.
With this space in use, the divergence-free constraint \eqref{euler-2} is
point-wisely satisfied by design, and the pressure do not enter into the weak formulation of the 
scheme.
The semi-discrete scheme reads as follows:
find $\bld u_h(t)\in \Sh^{k,-1}$
such that
 \begin{align}
  \label{scheme-euler}
(\partial_t \bld u_h, \bld v_h)_{\Oh} + \mathcal{C}_h(\bld u_h; \bld u_h, \bld v_h) = 0,
\quad \forall \bld v_h\in \Sh^{k,-1}.
\end{align}
where $(\cdot,\cdot)_\Oh$ denotes the standard $L^2$-inner product,
and the {\it upwinding} trilinear form 
 \begin{align}
  \label{adv}
\mathcal{C}_h(\bld u_h; \bld u_h, \bld v_h):=
\sum_{T\in\Oh}\int_T -(\bld u_h\otimes\bld u_h):\grads \bld v_h\,\mathrm{dx}
  +\int_{\partial T}(\bld u_h\cdot\bld n)({\bld u}_h^{-}\cdot\bld v_h)\,\mathrm{ds} \nonumber\\
 \end{align}
 where the upwinding numerical flux ${\bld u}_h^-|_F = \bld u_h|_{K^-}$
 with $K^-$ being the element
 such that its outward normal direction $\bld n^-$ on the facet $F$  satisfies 
 ${\bld u}_h^-\cdot \bld n^-\ge 0$ (outflow boundary).

 Since 
\[
 \mathcal{C}_h(\bld u_h; \bld u_h, \bld u_h)=
 \sum_{F\in\Eh^i}\int_{F}|\bld u_h\cdot\bld n|(\jump{\bld u_h}\cdot \jump{\bld u_h})\,\mathrm{ds}
 \ge 0,
\]
the scheme \eqref{scheme-euler} is energy-stable in the sense that 
\[
\partial_t \|\bld u_h^2(t)\|_{\Oh}\le 0,
\]
where $\|\cdot\|_\Oh$ denotes the $L^2$-norm on $\Oh$.

\subsection{Temporal discretization}
The semi-discrete scheme \eqref{scheme-euler} can be written as
\[
 \mathcal{M}(\partial_t\bld u_h) = \mathcal{L}(\bld u_h),
\] 
where $\mathcal{M}$ is the mass matrix for the space $\Sh^{k,-1}$, and 
$\mathcal{L}(\bld u_h)$ the spatial discretization  operator.
Any explicit time stepping techniques can be applied to the scheme \eqref{scheme-euler}.
We use the following three-stage, third-order strong-stability 
preserving Runge-Kutta method (TVD-RK3) \cite{ShuOsher88} in 
our numerical experiments: 
\begin{align}
\label{rk3}
 \mathcal{M}\bld u_h^{(1)} = &\; \mathcal{M}\bld u_h^n +\Delta t^n \mathcal{L}(\bld u_h^n),\nonumber\\
\mathcal{M}\bld u_h^{(2)} = &\; \frac34\mathcal{M}\bld u_h^n +
\frac14\left[\mathcal{M}\bld u_h^{(1)}+\Delta t^n \mathcal{L}(\bld u_h^{(1)})\right],\\
\mathcal{M}\bld u_h^{n+1} = &\; \frac13\mathcal{M}\bld u_h^n 
+\frac23\left[\mathcal{M}\bld u_h^{(2)}+
\Delta t^n \mathcal{L}(\bld u_h^{(2)})\right],\nonumber
\end{align}
where $\bld u_h^{n}$ is the given velocity at time level $t^n$ and 
$\bld u_h^{n+1}$ is the computed velocity at time level $t^{n+1} = t^n+\Delta t^n$.
In each time step, three mass matrix inversion is needed. 

% In the next subsection, we show an efficient 
% implementation 
% of the fully-discrete scheme coupled with forward Euler time stepping
% that {\it avoid global mass 
% matrix inversion}.
% The forward Euler scheme reads as follows:
% given the numerical solution at time $t^n$, $\bld u_h^n\in\Sh^{k,-1}\approx \bld u(t^n)$, 
% compute solution at next time level
% $\bld u_h^{n+1}\in\Sh^{k,-1}\approx \bld u(t^{n+1})$ by the following scheme
% \begin{align}
%  \label{FE}
%  (\bld u_h^{n+1}, \bld v_h)_{\Oh} =
% \underbrace{(\bld u_h^{n}, \bld v_h)_{\Oh}- \Delta t^n\, \mathcal{C}_h(\bld u_h^n; \bld u_h^n, \bld v_h)}_{:=\mathcal{F}^n(\bld v_h)},
%   \quad \forall \bld v_h\in\Sh^{k,-1},
% \end{align}
% where $\Delta t^n=t^{n+1}-t^n$.
\begin{remark}[Implementation]
Despite the mathematical simplicity of the solely velocity based formulation \eqref{scheme-euler} 
and the ease of using explicit time stepping methods of the resulting ODE system,
to the best of our knowledge, the method was never directly implemented in the literature.
The major obstacle is that the space $\Sh^{k,-1}$ is not a
standard finite element space
\gfv{whose basis functions can be easily defined},
due to the built-in global 
divergence-free constraint \gfv{and its interplay with the boundary conditions}. 
\gfv{By the finite element de Rham complex property\cite{ArnoldFalkWinther06}, we have, in two dimensions,
\begin{align*}
 \Sh^{k,-1} = \nabla \times \Big\{\phi\in H^1(\Omega):&\quad \phi|_T\in \pol^{k+1}(T),\;\;\;\forall T\in\Oh, 
 \\
&\;\;\quad\quad (\nabla\times \phi)\cdot \bld n=0 \text{ on }\partial\Omega\Big\},
\end{align*}
where the two-dimension $\mathrm{curl}$ operator `` $\nabla\times$''  is the rotated gradient,
and, in three dimensions, 
\begin{align*}
 \Sh^{k,-1} = \nabla \times \Big\{\bld \phi\in H(\mathrm{curl},\Omega):&\quad \bld \phi|_T\in 
 [\pol^{k+1}(T)]^3,
 \;\forall T\in\Oh, \\
&\;\;\;\quad\quad (\nabla\times \bld \phi)\cdot \bld n=0  \text{ on }\partial\Omega\Big\}.
\end{align*}
The difficulty of basis construction of this space in two dimensions lies in the 
treatment of the boundary condition for domain with more than one piece of connected boundary, 
which can be resolved by a weakly enforcement of boundary conditions.
On the other hand, the difficulty of basis construction 
in three dimensions is more fundamental, which is due to the fact that 
the $\mathrm{curl}$ operator has a large kernel including all gradient fields. 
% In particular, there exists no local bases for the space 
% $\Sh^{k,-1}$, and at a first glance, the method seems impossible to be implemented efficiently.
}
% Even with a set of (global) bases available, the mass matrix is dense whose inversion shall be 
% very expensive.

\gfv{
In the next subsection, we introduce proper Lagrange multipliers to avoid the direct use of the 
divergence-free space $\Sh^{k,-1}$. 
}
% We will address the implementation issue in the next subsection 
% with a reformulation of the scheme \eqref{scheme-euler} back 
% to the more familiar velocity-pressure formulation \cite{Guzman17}, \gfv{which avoids  
% the use of the divergence-free space  $\Sh^{k,-1}$}. 
% In particular, we show that with proper reformulation, the inversion of the mass matrix is 
% equivalent to a (hybrid-)mixed Poisson solver.
% In particular, we will show that with a proper reformulation of the scheme, 
% it can be very efficiently implemented when coupled with explicit time stepping methods.
% The reformulation, see subsection \ref{sec:ref},  is
% closely related to a velocity-pressure formulation \cite{Guzman17}, where instead of the pressure, we 
% approximate a {\it scaled} pressure term.
% Such reformulation ensure that the {\it same} linear system being solved in the appearance of
% variable time step size.
\end{remark}

% \subsection{Avoid mass matrix inversions}
\subsection{\gfv{Avoid bases construction for $\Sh^{k,-1}$}}
\label{sec:ref}
% It is clear that to implement the scheme \eqref{FE}, we essentially encounter two difficulties,
% the choice of the basis function for the space $\Sh^{k,-1}$ and the inversion of the resulting
% global mass matrix.
% We show in this subsection that both difficulties can be easily resolved by a careful reformulation of the
% scheme \eqref{FE}.
In this subsection, we show an efficient 
implementation of the scheme coupled with forward Euler time stepping
that \gfv{\it avoid bases construction for the space $\Sh^{k,-1}$}.
The forward Euler scheme for \eqref{scheme-euler} reads as follows:
given the numerical solution at time $t^n$, $\bld u_h^n\in\Sh^{k,-1}\approx \bld u(t^n)$, 
compute the solution at next time level
$\bld u_h^{n+1}\in\Sh^{k,-1}\approx \bld u(t^{n}+\Delta t^n)$ by the following set of equations:
\begin{align}
 \label{FE}
 (\bld u_h^{n+1}, \bld v_h)_{\Oh} =
\underbrace{(\bld u_h^{n}, \bld v_h)_{\Oh}- \Delta t^n\, \mathcal{C}_h(\bld u_h^n; \bld u_h^n, \bld v_h)}_{:=\mathcal{F}^n(\bld v_h)},
  \quad \forall \bld v_h\in\Sh^{k,-1}.
\end{align}
% where $\Delta t^n=t^{n+1}-t^n$.

\subsubsection*{\gfu{Conversion} to a mixed-Poisson solver (via a velocity-pressure formulation)}
We introduce the following equivalent formulation of the scheme \eqref{FE} that 
use a larger velocity space that is divergence-conforming, but not divergence-free:
find $({\bld u}_{h,\mathrm{mix}}^{n+1}, w_h^{n+1})\in \Sh^{k}\times Q_h^{k-1}$ such that 
\begin{subequations}
 \label{FE2}
\begin{alignat}{2}
 \label{FE2-1}
 (\bld u_{h,\mathrm{mix}}^{n+1}, \bld v_h)_{\Oh}
 -
 (w_{h}^{n+1}, \divs \bld v_h)_{\Oh}
 =&\;\mathcal{F}^n(\bld v_h),
&&\;  \quad \forall \bld v_h\in\Sh^{k},\\
 \label{FE2-2}
 (\divs \bld u_{h,\mathrm{mix}}^{n+1}, z_{h})_{\Oh} =&\; 0 
&&\;  \quad \forall z_h\in Q_h^{k-1}.
\end{alignat}
\end{subequations}
Since $\Sh^k$ is the standard BDM space \cite{BrezziDouglasMarini85}, the scheme \eqref{FE2} can be readily implemented.
Notice that the scheme \eqref{FE2} is nothing but a mixed Poisson formulation (with a different 
right hand side vector), 
whose well-posedness is well-known.
% Comparing with the original Euler equation \eqref{euler},
% it is clear to see that the variable $w_h^{n+1}$ in \eqref{FE2} is nothing but the {\it scaled} pressure field 
% $\Delta t^{n}\, p_h^{n+1}$.

The equivalence of schemes \eqref{FE} and \eqref{FE2} is given below.
\begin{theorem}
 Let $({\bld u}_{h,\mathrm{mix}}^{n+1}, w_h^{n+1})\in \Sh^{k}\times Q_h^{k-1}$ 
 be the unique solution to the equations \eqref{FE2}. Then, ${\bld u}_{h,\mathrm{mix}}^{n+1}\in \Sh^{k,-1}$
 solves the equations \eqref{FE}.
 Moreover, the quantity $w_h^{n+1}/\Delta t^n$ is an approximation of the pressure field at time $t^{n+1}$.
\end{theorem}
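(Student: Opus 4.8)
The plan is to read off both assertions directly from the saddle-point structure of \eqref{FE2}, using two structural facts: that $\Sh^{k,-1}$ is precisely the divergence-free subspace $\{\bld v\in\Sh^k:\divs\bld v=0\}$ of the BDM space, and that the divergence of any field in $\Sh^k$ lands in the pressure space $Q_h^{k-1}$. First I would show $\bld u_{h,\mathrm{mix}}^{n+1}\in\Sh^{k,-1}$, i.e. that it is pointwise divergence-free. Since $\bld u_{h,\mathrm{mix}}^{n+1}\in\Sh^k$, the field $\divs\bld u_{h,\mathrm{mix}}^{n+1}$ is piecewise in $\pol^{k-1}$; moreover $\Sh^k\subset H_0(\mathrm{div},\Omega)$, so its normal trace vanishes on $\partial\Omega$ and the divergence theorem gives $\int_\Omega\divs\bld u_{h,\mathrm{mix}}^{n+1}=\int_{\partial\Omega}\bld u_{h,\mathrm{mix}}^{n+1}\cdot\bld n=0$. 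Hence $\divs\bld u_{h,\mathrm{mix}}^{n+1}\in Q_h^{k-1}$ and is itself an admissible test function in \eqref{FE2-2}; choosing $z_h=\divs\bld u_{h,\mathrm{mix}}^{n+1}$ there yields $\|\divs\bld u_{h,\mathrm{mix}}^{n+1}\|_{\Oh}^2=0$, so $\bld u_{h,\mathrm{mix}}^{n+1}$ is exactly divergence-free and lies in $\Sh^{k,-1}$.

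Next I would restrict the test space in \eqref{FE2-1} from $\Sh^k$ down to $\Sh^{k,-1}\subset\Sh^k$. For any such $\bld v_h$ one has $\divs\bld v_h=0$, so the Lagrange-multiplier term $(w_h^{n+1},\divs\bld v_h)_{\Oh}$ drops out and \eqref{FE2-1} collapses to $(\bld u_{h,\mathrm{mix}}^{n+1},\bld v_h)_{\Oh}=\mathcal{F}^n(\bld v_h)$ for all $\bld v_h\in\Sh^{k,-1}$, which is exactly the defining relation \eqref{FE}. Combined with the first step, this shows $\bld u_{h,\mathrm{mix}}^{n+1}$ both lies in $\Sh^{k,-1}$ and solves \eqref{FE}. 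Since the mass matrix is symmetric positive definite on $\Sh^{k,-1}$, the solution of \eqref{FE} is unique, and therefore the two velocity fields must coincide.

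For the pressure interpretation I would rewrite \eqref{FE2-1} in incremental form, substituting $\mathcal{F}^n(\bld v_h)=(\bld u_h^n,\bld v_h)_{\Oh}-\Delta t^n\,\mathcal{C}_h(\bld u_h^n;\bld u_h^n,\bld v_h)$ and dividing by $\Delta t^n$, to obtain $\big(\tfrac{1}{\Delta t^n}(\bld u_{h,\mathrm{mix}}^{n+1}-\bld u_h^n),\bld v_h\big)_{\Oh}+\mathcal{C}_h(\bld u_h^n;\bld u_h^n,\bld v_h)-\big(\tfrac{w_h^{n+1}}{\Delta t^n},\divs\bld v_h\big)_{\Oh}=0$. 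This is precisely a forward-Euler-in-time, DG-in-space discretization of the momentum equation \eqref{euler-1} tested against $\bld v_h$, once the continuous pressure term $(\nabla p,\bld v)$ is integrated by parts into $-(p,\divs\bld v)$ using the $H(\mathrm{div})$-conformity and vanishing normal trace of $\bld v_h$. Matching the two pressure terms identifies $w_h^{n+1}/\Delta t^n$ as the discrete multiplier enforcing $\divs\bld u_{h,\mathrm{mix}}^{n+1}=0$, occupying exactly the slot of $p$; a standard consistency argument then confirms it is a (first-order) approximation of the pressure at $t^{n+1}$.

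The only genuinely delicate point is the membership $\divs\bld u_{h,\mathrm{mix}}^{n+1}\in Q_h^{k-1}$ that legitimizes testing \eqref{FE2-2} with $z_h=\divs\bld u_{h,\mathrm{mix}}^{n+1}$: it hinges not only on the piecewise polynomial degree of the divergence but also on the zero-mean condition built into $Q_h^{k-1}$, which in turn comes from the boundary condition encoded in $\Sh^k\subset H_0(\mathrm{div},\Omega)$. Everything else follows mechanically from the saddle-point form together with the identification $\Sh^{k,-1}=\{\bld v\in\Sh^k:\divs\bld v=0\}$.
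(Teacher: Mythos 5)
Your proof is correct and takes essentially the same approach as the paper: you deduce $\divs\bld u_{h,\mathrm{mix}}^{n+1}=0$ from \eqref{FE2-2} (where the paper simply quotes $\divs\Sh^{k}=Q_h^{k-1}$, you verify the needed inclusion $\divs\Sh^{k}\subseteq Q_h^{k-1}$ via the polynomial degree and the zero-mean property, then test with $z_h=\divs\bld u_{h,\mathrm{mix}}^{n+1}$), and you then restrict the test space in \eqref{FE2-1} to $\Sh^{k,-1}$ to recover \eqref{FE}. Your pressure identification through the incremental form is exactly the equivalence with the first-order IMEX velocity--pressure scheme \eqref{vp}, i.e.\ $w_h^{n+1}=\Delta t^n p_h^{n+1}$, that the paper records in Remark \ref{rk1}.
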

\begin{proof}
 Since $\divs\Sh^{k} = Q_h^{k-1}$, the equations \eqref{FE2-2} implies that 
$\divs \bld u_{h,\mathrm{mix}}^{n+1}=0$, hence $\bld u_{h,\mathrm{mix}}^{n+1}\in\Sh^{k,-1}$. 
Taking $\bld v_h\in \Sh^{k,-1}$ in equations \eqref{FE2-1}, we get
\[(\bld u_{h,\mathrm{mix}}^{n+1}, \bld v_h)_{\Oh}=\;\mathcal{F}^n(\bld v_h).\]
Hence, $\bld u_{h,\mathrm{mix}}^{n+1}\in \Sh^{k,-1}$ is the solution to equations \eqref{FE}.
The quantity $w_h^{n+1}/\Delta t^n$ approximates the pressure variable is shown in 
Remark \ref{rk1} below.
\end{proof}

\begin{remark}[Equivalence with the velocity-pressure formulation]
\label{rk1}
Recall that the semi-discrete velocity-pressure formulation \cite{Guzman17}
which use a divergence-conforming velocity space $\Sh^k$ and 
the matching pressure space $Q_h^{k-1}$ is to find $(\bld u_h(t), p_h(t))\in \Sh^k\times Q_h^{k-1}$
such that
\begin{align*}
(\partial_t \bld u_h, \bld v_h)_{\Oh} -(p_h,\divs\bld v_h)_\Oh+ \mathcal{C}_h(\bld u_h; \bld u_h, \bld v_h) =&\; 0,
\quad \forall \bld v_h\in \Sh^{k},\\
(\divs\bld u_h, q_h)_\Oh =&\; 0,
\quad \forall q_h\in Q_h^{k-1}.
\end{align*}
An first-order IMEX time discretization yields the fully-discrete scheme
\begin{subequations}
\label{vp}
\begin{alignat}{2}
(\bld u_h^{n+1}, \bld v_h)_{\Oh} -(\Delta t^n p_h^{n+1},\divs\bld v_h)_\Oh
=&\; \mathcal{F}^n(\bld v_h),
&&\quad \forall \bld v_h\in \Sh^{k},\\
(\divs\bld u_h^{n+1}, q_h)_\Oh =&\; 0,
&&\quad \forall q_h\in Q_h^{k-1},
\end{alignat}
which is easily seen to be identical to the scheme \eqref{FE2} introduced earlier by
identifying $w_h^{n+1}$ with $\Delta t^n p_h^{n+1}$.
Hence, although we work with the velocity-only formulation \eqref{scheme-euler} mathematically, 
in the numerical implementation, pressure is always simultaneously been calculated.

However, we point out that the formulation \eqref{FE2} is preferred over \eqref{vp}
in the actual numerical implementation due to the fact that the matrix resulting from 
the linear system never changes for variable time step size $\Delta t^n$, which is equivalent to 
a mixed Poisson solver.
\end{subequations}
\end{remark}

Although efficient solvers are available for the mixed-Poisson 
saddle point linear system \eqref{FE2}, 
we prefer to use the celebrated hybridization technique \cite{ArnoldBrezzi85}
to convert it to a symmetric positive definition linear system, which is a lot easier to 
solve.
\subsubsection*{\gfu{Convertion} to a hybrid-mixed Poisson solver}
% The scheme \eqref{FE2} transforms the inversion of a dense mass matrix to 
% a sparse saddle point problem. 
% One can further hybridize this scheme  to yield an equivalent symmetric positive definite linear system, 
% which can be solved more efficiently; see Arnold and Brezzi \cite{ArnoldBrezzi85}.
The hybrid-mixed formulation is given below:
find $({\bld u}_{h,\mathrm{hyb}}^{n+1}, w_h^{n+1},\lambda_h^{n+1})\in \Shd^{k}\times Q_h^{k-1}\times 
M_h^k$ such that 
\begin{subequations}
 \label{FE3}
\begin{alignat}{2}
 \label{FE3-1}
 (\bld u_{h,\mathrm{mix}}^{n+1}, \bld v_h)_{\Oh}
 -
 (w_{h}^{n+1}, \divs \bld v_h)_{\Oh}&\nonumber\\
 + \sum_{T\in\Oh}\int_{\partial T} 
 \lambda_{h}^{n+1}(\bld v_h\cdot \bld n)\,\mathrm{ds}
 &=\;\mathcal{F}^n(\bld v_h),
&&\;  \quad \forall \bld v_h\in\Shd^{k},\\
 \label{FE3-2}
 (\divs \bld u_{h,\mathrm{mix}}^{n+1}, z_{h})_{\Oh} &=\; 0, 
&&\;  \quad \forall z_h\in Q_h^{k-1},\\
 \label{FE3-3}
\sum_{T\in\Oh}\int_{\partial T} 
 \mu_{h}(\bld u_h\cdot \bld n)\,\mathrm{ds} &=\; 0 
&&\;  \quad \forall \mu_h\in M_h^{k}.
\end{alignat}
\end{subequations}
After static condensation, the scheme \eqref{FE3} yields an SPD linear system for 
the Lagrange multiplier $\lambda_h^{n+1}$.
The following equivalence result is now trivially true.
\begin{theorem}
  Let $({\bld u}_{h,\mathrm{hyb}}^{n+1}, w_h^{n+1},\lambda_h^{n+1})\in \Shd^{k}\times Q_h^{k-1}\times 
  M_h^k$ 
 be the unique solution to the equations \eqref{FE3}. 
 Then, ${\bld u}_{h,\mathrm{hyb}}^{n+1}\in \Sh^{k,-1}$
 solves the equations \eqref{FE}.
  Moreover, the quantity $w_h^{n+1}/\Delta t^n$ is 
  an approximation of the pressure field at time $t^{n+1}$ on the mesh $\Oh$, while 
the quantity $\lambda_h^{n+1}/\Delta t^n$ is 
an approximation of the pressure field at time $t^{n+1}$ on the mesh skeleton $\Eh$.
\end{theorem}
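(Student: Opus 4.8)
The plan is to reduce the hybrid-mixed system \eqref{FE3} to the mixed system \eqref{FE2} and then invoke the preceding theorem. First I would exploit the third equation \eqref{FE3-3}: for any $\bld u_{h,\mathrm{hyb}}^{n+1}\in\Shd^{k}$, the normal trace $\bld u_{h,\mathrm{hyb}}^{n+1}\cdot\bld n$ restricted to a facet $F$ lies in $\pol^k(F)$, which is exactly the local component of the multiplier space $M_h^k$. On each interior facet the sum $\sum_{T}\int_{\partial T}\mu_h(\bld u_{h,\mathrm{hyb}}^{n+1}\cdot\bld n)\,\mathrm{ds}$ collects the two contributions into $\int_F \mu_h\,\jump{\bld u_{h,\mathrm{hyb}}^{n+1}}\,\mathrm{ds}$, while on boundary facets it collects $\int_F\mu_h(\bld u_{h,\mathrm{hyb}}^{n+1}\cdot\bld n)\,\mathrm{ds}$. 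Choosing $\mu_h$, facet by facet, equal to the jump on interior facets and to the normal trace on boundary facets forces both to vanish, so $\jump{\bld u_{h,\mathrm{hyb}}^{n+1}}_F=0$ for every $F\in\Eh$ and $\bld u_{h,\mathrm{hyb}}^{n+1}\cdot\bld n=0$ on $\partial\Omega$; hence $\bld u_{h,\mathrm{hyb}}^{n+1}\in\Sh^{k}$.

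Once normal conformity is established, I would test \eqref{FE3-1} only against $\bld v_h\in\Sh^{k}\subset\Shd^{k}$. For such $\bld v_h$ the hybrid coupling term $\sum_T\int_{\partial T}\lambda_h^{n+1}(\bld v_h\cdot\bld n)\,\mathrm{ds}$ vanishes, by the same facet-by-facet cancellation used above (interior jumps are zero, boundary normal traces are zero). Thus \eqref{FE3-1} collapses onto \eqref{FE2-1}, and \eqref{FE3-2} is literally \eqref{FE2-2}, so the pair $(\bld u_{h,\mathrm{hyb}}^{n+1},w_h^{n+1})$ solves the mixed system \eqref{FE2}. By uniqueness of the mixed solution together with the preceding theorem, $\bld u_{h,\mathrm{hyb}}^{n+1}\in\Sh^{k,-1}$ solves \eqref{FE}, and $w_h^{n+1}/\Delta t^n$ approximates the pressure on $\Oh$, as recorded in Remark \ref{rk1}. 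This is the precise sense in which the statement is ``trivially true'': every conclusion except the interpretation of $\lambda_h^{n+1}$ is inherited from the earlier equivalence.

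The remaining, and only genuinely substantive, point is the identification of $\lambda_h^{n+1}/\Delta t^n$ as a pressure approximation on the skeleton $\Eh$. I would argue this by consistency: replacing $(\bld u_{h,\mathrm{hyb}}^{n+1},w_h^{n+1},\lambda_h^{n+1})$ in \eqref{FE3-1} by the exact data $(\bld u(t^{n+1}),\Delta t^n\,p(t^{n+1}),\Delta t^n\,p(t^{n+1})|_\Eh)$ and integrating by parts elementwise, the combination $-(\Delta t^n p,\divs\bld v_h)_T+\int_{\partial T}\Delta t^n p\,(\bld v_h\cdot\bld n)\,\mathrm{ds}$ reduces to $\Delta t^n(\nabla p,\bld v_h)_T$, which turns \eqref{FE3-1} into a consistent discretization of the momentum balance $\partial_t\bld u+(\bld u\cdot\nabla)\bld u+\nabla p=0$; since the single-valued facet multiplier must match the single-valued trace of the continuous pressure, this pins $\lambda_h^{n+1}/\Delta t^n\approx p(t^{n+1})|_\Eh$. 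Alternatively I would simply invoke the classical hybridization theory of \cite{ArnoldBrezzi85}, in which the multiplier of a hybridized mixed Poisson method is precisely the numerical trace of the scalar (here, pressure) variable on the mesh skeleton. I expect this pressure-trace interpretation to be the only obstacle requiring an argument beyond the previous theorem, and even it is settled by standard hybridization reasoning rather than by any new estimate; the first step is mechanical, contingent only on the elementary bookkeeping that $M_h^k$ has exactly the facet degree needed to annihilate the normal traces without over-constraining the system.
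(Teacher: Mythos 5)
Your proof is correct and follows exactly the route the paper intends: the paper itself gives no written proof, declaring the result ``trivially true'' because the multiplier equation \eqref{FE3-3} forces normal-trace continuity, the coupling term then vanishes for divergence-conforming test functions so that \eqref{FE3} collapses to \eqref{FE2}, and the previous theorem (plus the Arnold--Brezzi hybridization interpretation of $\lambda_h^{n+1}$ as the pressure trace on the skeleton) finishes the argument. You have simply spelled out that standard reduction in full, so there is nothing to add or correct.
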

\begin{remark}[More efficient implementation]
% The globally-coupled degrees of freedom for the hybrid-mixed solver \eqref{FE3} consists
% of the Lagrange multipliers $\lambda_h^{n+1}$ only.
One can further improve the efficiency of this hybrid-mixed solver 
\eqref{FE3} by taking advantage of the divergence-free property of the velocity space.
In particular, we can restrict the velocity space to be {\it locally} divergence-free $\Shd^{k,-1}$, and 
remove the equations involving $w_h^{n+1}$ and $z_h$ in \eqref{FE3}, which results in the following 
simplified scheme: find $({\bld u}_{h,\mathrm{hyb}}^{n+1},\lambda_h^{n+1})\in \Shd^{k,-1}\times 
M_h^k$ such that 
\begin{subequations}
 \label{FE4}
\begin{alignat}{2}
 \label{FE4-1}
 (\bld u_{h,\mathrm{mix}}^{n+1}, \bld v_h)_{\Oh}
 - \sum_{T\in\Oh}\int_{\partial T} 
 \lambda_{h}^{n+1}(\bld v_h\cdot \bld n)\,\mathrm{ds}
 &=\;\mathcal{F}^n(\bld v_h),
&&\;  \quad \forall \bld v_h\in\Shd^{k,-1},\\
 \label{FE4-2}
\sum_{T\in\Oh}\int_{\partial T} 
 \mu_{h}(\bld u_h\cdot \bld n)\,\mathrm{ds} &=\; 0 
&&\;  \quad \forall \mu_h\in M_h^{k}.
\end{alignat}
\end{subequations}
Note that local bases for the DG space $\Shd^{k,-1}$ can be easily constructed.
\gfu{We mention that such spatial discretization which use 
a locally divergence-free velocity space and a hybrid (facet) pressure space was 
already considered in  \cite{Montlaur10}.
% The novelty of our approach is that the velocity normal continuity is reinforced 
% through equation \eqref{FE4-2} and the velocity approximation is {\it globally} divergence-free.
}

Our numerical simulations are performed using the open-source finite-element software 
{\sf NGSolve}\cite{Schoberl16}, \url{https://ngsolve.org/}, in which
we still use the formulation \eqref{FE3}, but take the velocity space
to be $\Shd^{k,0}$, and the space for $w_h^{n+1}$ to be piece-wise constants
$Q_h^0$.
The local bases for the space $\Shd^{k,0}$ for various element shapes can be found in 
\cite{Zaglmayr06}.
\end{remark}

% \begin{remark}[Recover pressure at variable time step size]
% The variables $w_h^{n+1}$ and $\lambda_h^{n+1}$ in the scheme \eqref{FE3}
% are scaled version of pressure. 
% To recover the pressure approximation, we suggest to rescale 
% the variables $w_h^{n+1}$ and $\lambda_h^{n+1}$ 
% back to pressure by multiplying the corresponding degrees
% of freedoms (DOFs) by a factor of $1/\Delta t^n$ after time stepping from $t^n$ to $t^{n+1}$, and 
% multiplying these DOFs by a factor of $1/\Delta t^{n+1}$ before time stepping from 
% $t^{n+1}$ to $t^{n+2}$. Of course, such reformulation is not necessary when the time step size is uniform.
% \end{remark}

\subsection{Inflow/outflow boundary conditions}
% We conclude the discussion of the Euler solver with some more remarks on 
Finally, we briefly comment on the imposing of inflow/outflow boundary conditions.
% In particular, we show how to compute the lift/drag coefficients without the knowledge of
% pressure.
Suppose the Euler equation \eqref{euler} is replaced with the following inflow/outflow/wall boundary
conditions:
\begin{alignat}{2}
\label{be}
 \bld u\cdot\bld n =&\; u_{\mathrm{in}} &&\;\; \text{ on } \Gamma_{in},\;\;\;\;
 \bld u\cdot\bld n =\; 0 \;\; \text{ on } \Gamma_{wall},\;\;\;\;
 p =\; 0 \;\; \text{ on } \Gamma_{out},
\end{alignat}
with $\partial \Omega =\Gamma_{in}\cup \Gamma_{wall}\cup \Gamma_{out}$.
Introducing the finite element spaces without/with boundary condition
\begin{align*}
 \widetilde{\Sh}^{k,-1} : =&\; \left\{\bld v \in \Shd^{k,-1}, \;\;
\jump{\bld v\cdot\bld n}_F = 0 \;\forall F\in\Eh^i,
% ,
% \bld v\cdot\bld n = \left\{\begin{tabular}{ll}
% $g$ & on $\Gamma_{in}$,\\
% $0$& on $\Gamma_{wall}.
%$
%                    \end{tabular}
%\right.
\right\},\\
 \widetilde{\Sh}^{k,-1}_{,g} : =&\; \left\{\bld v \in  \widetilde{\Sh}^{k,-1}, \;\;
\bld v\cdot\bld n = \left\{\begin{tabular}{ll}
$g$ & on $\Gamma_{in}$,\\
$0$& on $\Gamma_{wall}.
$
                   \end{tabular}
\right.
\right\}.
\end{align*}
Note that for any pressure field $p\in H^1(\Omega)$ satisfying $p=0$ on $\Gamma_{out}$, the 
following identity holds, 
\begin{align}
\label{pres}
\int_\Omega \nabla p \cdot \bld v_h \mathrm{dx}
=&\;-\int_\Omega p \divs(\bld v_h) \mathrm{dx}
+\int_{\partial\Omega} p(\bld v_h\cdot\bld n) \mathrm{ds}\nonumber\\
=&\int_{\Gamma_{in}\cup\Gamma_{wall}} p(\bld v_h\cdot\bld n)\mathrm{ds}\;
,\quad \forall \bld v_h\in
\widetilde{\Sh}^{k,-1}.
\end{align}
In particular, $\int_\Omega \nabla p \cdot \bld v_h \mathrm{dx}=0$
for all $\bld v_h\in
\widetilde{\Sh}_{,0}^{k,-1}.$

Then, the semi-discrete scheme for \eqref{euler} with boundary condition \eqref{be}
is to find $\bld u_h(t)\in \widetilde{\Sh}_{,u_\mathrm{in}}^{k,-1}$
such that
 \begin{align*}
(\partial_t \bld u_h, \bld v_h)_{\Oh} + \mathcal{C}_h(\bld u_h; \bld u_h, \bld v_h)
= 0, \quad \forall \bld v_h\in \widetilde{\Sh}_{,0}^{k,-1}.
\end{align*}
A corresponding implementation of an explicit fully discrete scheme in the spirit 
of subsection \ref{sec:ref} can be obtained easily.

% In many cases, one is interested in calculating the lift/drag coefficients along 
% part of boundary of the domain, see e.g. \cite{LD96}. 
% In the two dimensional case, the lift/drag forces are defined as 
% \[
%  F_D = -\int_{S} p \, n_x \mathrm{ds}, \;\;
%  F_L = -\int_{S} p\, n_y \mathrm{ds},
% \]
% where $S\subset \Gamma_{wall}$ represents part of the boundary.
% We show that $F_D$ and $F_L$ can be computed using information only from the velocity using 
% a variational approach.
% In, particular, utilizing equations \eqref{pres} and \eqref{euler-1}, one have, for all 
% $\bld v_h\in \widetilde{\Sh}^{k,-1}$ (no boundary condition),
%  \begin{align*}
% \int_{\Gamma_{in}\cup\Gamma_{wall}} p(\bld v_h\cdot\bld n)\mathrm{ds}
% =&\;
% \int_\Omega \nabla p \cdot \bld v_h \mathrm{dx}\\
% =&\;
% \int_\Omega (-\partial_t\bld u-(\bld u\cdot\nabla)\bld u) \cdot \bld v_h \mathrm{dx}\\
% \approx&\;
% -(\partial_t \bld u_h, \bld v_h)_{\Oh} - \mathcal{C}_h(\bld u_h; \bld u_h, \bld v_h).
%  \end{align*}
%  The right hand side is easily computable 
%  as long as the velocity approximation $\bld u_h$ is provided.
%  Taking the test function $\bld v_h$ such that $\bld v_h\cdot \bld n=n_x$ on 
%  $S$ and 
%  Now, denoting $\Eh^S$ be the collection of boundary edges belonging to the boundary $S$, and 
%  $\Eh^O$ the collection of boundary edges belonging to $\Gamma_{out}$, and 
%  $\Eh^R$ be the collection of boundaries belong to 
% Taking $\bld v_h\in  \widetilde{\Sh}^{k,-1}$ such that 
% $\bld v_h\cdot\bld n = 0
% $ on $x$

\section{Navier-Stokes equations}
Now, we consider \gfu{extending} the scheme \eqref{scheme-euler} to 
the following incompressible Navier-Stokes equations with free-slip boundary conditions:
\begin{subequations}
\label{ns}
 \begin{align}
  \label{ns-1}
\partial_t \bld u +(\bld u\cdot\nabla)\bld u + \nabla p-\nu\triangle \bld u = &\; \bld f, && \text{ in }\Omega,
  \\
  \label{ns-2}
\divs \bld u = &\; 0, && \text{ in }\Omega,
  \\
  \label{ns-3}
 \bld u\cdot \bld n = &\; 0, && \text{ on }\partial\Omega,
  \\
  \label{ns-4}
\nu((\nabla \bld u)\, \bld n)\times \bld n = &\; 0, && \text{ on }\partial\Omega,
 \end{align}
\end{subequations}
with a divergence-free initial condition 
\[
\bld u(x,0) = \bld u_0(x)\quad \forall x\in \Omega.
\]
Here $\nu=1/\mathrm{Re}$ is the viscosity.
Again, we point out that other boundary conditions such as 
inflow/outflow/wall boundary conditions can be easily applied.

We discretize the viscous term using symmetric interior penalty DG method \cite{ArnoldBrezziCockburnMarini02}.
The semi-discrete scheme reads as follows:
find $\bld u_h(t)\in \Sh^{k,-1}$
such that
 \begin{align}
  \label{scheme-ns}
(\partial_t \bld u_h, \bld v_h)_{\Oh} + \mathcal{C}_h(\bld u_h; \bld u_h, \bld v_h)
+\mathcal{B}_h(\bld u_h, \bld v_h)= 0, \quad \forall \bld v_h\in \Sh^{k,-1},
\end{align}
where the advective trilinear form $\mathcal{C}_h$ is given by \eqref{adv}, and the 
viscous bilinear form $\mathcal{B}_h$ is given below
\begin{align}
  \label{viscous}
  \mathcal{B}_h(\bld u_h, \bld v_h)
  :=&\;\sum_{T\in\Oh}\int_T\nu\nabla u:\nabla v\,\mathrm{dx}\nonumber\\
&  -\sum_{F\in\Eh^i}\int_F\nu\mean{\nabla \bld u_h}\jump{\bld v_h\otimes\bld n}\,\mathrm{ds}\nonumber\\
&  -\sum_{F\in\Eh^i}\int_F\nu\mean{\nabla\bld v_h}\jump{\bld u_h\otimes\bld n}\,\mathrm{ds}\nonumber\\
&  +\sum_{F\in\Eh^i}\int_F\nu\frac{\alpha k^2}{h}\jump{\bld u_h\otimes\bld n}
  \jump{\bld v_h\otimes\bld n}\,\mathrm{ds},
\end{align}
with $\alpha>0$ a sufficiently large stabilization constant.

For a fully discrete scheme, we use the same explicit stepping as the inviscid case. 
\gfu{As mentioned in the introduction, our explicit method shall be applied to high Reynolds number flows 
such that the mesh Reynolds number $Re_h$ defined in \eqref{reh} is not too small
to avoid severe time stepping restrictions.}

\section{Numerical results}
\label{sec:numerics}
In this section, we present several numerical results in two dimensions.
The numerical results are performed using the NGSolve software \cite{Schoberl16}.
The first four tests are performed on triangular meshes, \gfu{where} the last one \gfu{on} 
a rectangular mesh (with the obvious modification of the divergence-conforming space 
from BDM \cite{BrezziDouglasMarini85} to RT \cite{RaviartThomas77}).
For the viscous operator \eqref{viscous}, 
we take the stabilization parameter $\alpha$ to be $2$. 
We use the TVD-RK3 time stepping \eqref{rk3} with sufficiently small time 
step size \gfu{for all the tests, except for \textbf{Example 1b} where the 
classical fourth order Runge-Kutta method is also used to check the temporal accuracy.}
% that ensure stability.
% \[\Delta t \le \min\{C\!F\!L_{\mathrm{conv}}h/v_{\max},C\!F\!L_{\mathrm{visc}} h^2/\nu\},\]
% where $C\!F\!L_{\mathrm{conv}}$ and $C\!F\!L_{\mathrm{visc}}$ are 
% the CFL stability constants depending on the polynomial
% degree $k$, and $v_{\max}$ is the maximal velocity speed.
We use a pre-factored sparse-Cholesky 
factorization for the hybrid-mixed Poisson solver that is needed in each time step.

\subsection*{\gfu{Example 1a: Spatial accuracy test}}
This example is used to check the \gfu{spatial accuracy} of our schemes, 
both for the Euler equations \eqref{euler}
and for the Navier-Stokes equations \eqref{ns} with $\mathrm{Re}=100$.
Following  \cite{LiuShu00}, we take the domain to be $[0,2\pi]\times [0,2\pi]$ and 
use a periodic boundary condition.
The initial condition and source term are chosen such that 
the exact solution is  
\[
 u_1 = -\cos(x)\sin(y)\exp(-2t/\mathrm{Re}), 
 u_2 = \sin(x)\cos(y)\exp(-2t/\mathrm{Re}).
\]
% \gfu{TVD-RK3 time stepping is used.}
The $L^2$-errors in velocity at $t=1$ on unstructured triangular meshes
are shown in Table \ref{table:error}.
It is clear to observe optimal $(k+1)$-th order of convergence for both cases.
% The optimal convergence for the Navier-Stokes equations was proven in \cite{CockburnKanschatSchotzau07}.

\begin{table}[ht]
\caption{\textbf{Example 1a:} History of convergence of the $L^2$-velocity errors.} % title of Table
\centering % used for centering table
% \resizebox{0.85\columnwidth}{!}
{%
\begin{tabular}{ cc cc|cc}
\toprule
  & & \multicolumn{2}{c}{Euler}
          & \multicolumn{2}{|c}{Navier-Stokes}
%           \\
\tabularnewline
% \\
% \midrule
$k$& $h$  & error & \gfu{eoc} & error & \gfu{eoc}
\tabularnewline
\midrule
\multirow{4}{2mm}{1} 
& 0.7854  &  2.339e-01  &  \gfu{--}   
&  2.234e-01  &  \gfu{--}
\\ 
& 0.3927  &  5.638e-02  &  2.05   
 &  5.195e-02  &  2.10 \\ 
&0.1963   &  1.446e-02  &  1.96    
 &  1.250e-02  &  2.06
\\
&0.0982  &  3.616e-03  &  2.00    
&  2.882e-03  &  2.12
\\ [1.5ex]
%&0.0491  &  9.063e-04  &  2.00    \\ 
% \midrule
\multirow{4}{2mm}{2} 
 & 0.7854  &  2.411e-02  &  \gfu{--}  
 &  2.193e-02  &  \gfu{--} \\ 
 & 0.3927  &  2.491e-03  &  3.27   
   &  2.142e-03  &  3.36 
 \\ 
 & 0.1963  &  2.968e-04  &  3.07   
   &  2.488e-04  &  3.11 \\ 
 & 0.0982  &  3.514e-05  &  3.08   
  &  2.792e-05  &3.16
 \\[1.5ex]
% \midrule
\multirow{4}{2mm}{3}
 & 0.7854  &  1.495e-03  &  \gfu{--}   
  &  1.338e-03  &  \gfu{--}
\\ 
 & 0.3927  &  7.883e-05  &  4.25   
  &  6.876e-05  &  4.28 \\ 
 & 0.1963  &  4.969e-06  &  3.99  
  &  4.392e-06  &  3.97 
 \\ 
 & 0.0982  &  2.907e-07  &  4.10    
   &  2.701e-07  &  4.02\\ 
\bottomrule
\end{tabular}}
\label{table:error} % is used to refer this table in the text
\end{table}

\subsection*{\gfu{Example 1b: Temporal accuracy test}}
\gfu{This example is used to check the \gfu{temporal accuracy} of our schemes.
We consider the Navier-Stokes equation \eqref{ns} with $\nu=1/4000$ on
a periodic domain $\Omega=[0,2\pi]\times [0,2\pi]$ with 
the following exact solution
\[
 u_1 = \sin(6\pi t)\sin(y), \;
 u_2 = \sin(6\pi t)\sin(2x).
\]
}
\gfu{We use a $P^6$ scheme \eqref{scheme-ns} 
on a fixed triangular mesh with mesh size $h=2\pi/32$ to keep the spatial 
error small. 
For the time discretization, we consider either the TVD-RK3 scheme \eqref{rk3}, or the 
% following 
classical four-stage, fourth order Runge-Kutta (RK4) scheme.
% (for the ODE system $\partial_t u_h = L(u_h)$):
% \begin{align*}
%  k_1 =& \;L(u_h^n),\\
%  k_2 =& \;L(u_h^n+\Delta t/2  k_1),\\
%  k_3 =& \;L(u_h^n+\Delta t/2  k_2),\\
%  k_4 =& \;L(u_h^n+\Delta t  k_3),\\
%  u^{n+1}_h = &\; u^n_h + \Delta t/6(k_1 + 2 k_2 + 2 k_3 + k_4).
% \end{align*}
The $L^2$-errors in velocity at $t=0.1$ with different time step size 
are shown in Table \ref{table:error2}.
It is clear to observe third order of convergence for TVD-RK3, and 
fourth order of convergence for RK4.}
% The optimal convergence for the Navier-Stokes equations was proven in \cite{CockburnKanschatSchotzau07}.

\gfu{\begin{table}[ht]
\caption{\textbf{Example 1b:} History of convergence of the $L^2$-velocity errors.} % title of Table
\centering % used for centering table
% \resizebox{0.85\columnwidth}{!}
{%
\begin{tabular}{ cc cc|cc}
\toprule
&  & \multicolumn{2}{c}{TVD-RK3}
          & \multicolumn{2}{|c}{RK4}
%           \\
\tabularnewline
% \\
% \midrule
& $\Delta t$  & error & \gfu{eoc} & error & \gfu{eoc}
\tabularnewline
\midrule
& 0.1/4  &  3.301e-04  &  --
&  1.688e-04  & {--}
\\ 
 & 0.1/8  &  3.654e-05  &  3.18
 &  1.098e-05  &  3.94 \\ 
  & 0.1/16  &  4.459e-06  &  3.03
  &  6.998e-07  &  3.97
\\
 & 0.1/32  &  5.580e-07  &  3.00 
 &  4.414e-08  &  3.99 
\\ 
% [1.5ex]
\bottomrule
\end{tabular}}
\label{table:error2} % is used to refer this table in the text
\end{table}}
\subsection*{Example 2: Double shear layer problem}
We consider the double shear layer problem used in \cite{Bell87,LiuShu00}. 
The Euler equation \eqref{euler}
on the domain $[0,2\pi]\times [0,2\pi]$ with a periodic boundary condition and an initial condition:
\begin{align}
 u_1(x,y,0) = &\;\left\{
 \begin{tabular}{ll}
$\mathrm{tanh}((y-\pi/2)/\rho)$  & $y\le \pi$\\[1ex]
$\mathrm{tanh}((3\pi/2-y)/\rho)$  & $y> \pi$\\
  \end{tabular}
 \right.,\\
u_2(x,y,0) = &\; \delta \sin(x), 
\end{align}
with $\rho = \pi/15$ and $\delta = 0.05$.

We use $P^3$ approximation on fixed uniform unstructured triangular meshes with mesh size $2\pi/40$ and $2\pi/80$, 
see Fig.~\ref{fig:ds}, and 
run the simulation  up to time $t=8$.
We plot the time history of total energy (square of the $L^2$-norm of velocity $\bld u_h$)
and total enstrophy (square of $L^2$-norm of vorticity $\omega_h:=\nabla_h\times \bld u_h$) 
in Fig.~\ref{fig:ds0},
as well as contours of the vorticity at $t=6$ and $t=8$ in 
Fig.~\ref{fig:ds1}
to show the resolution.
We can see from Fig.~\ref{fig:ds0} that the energy is monotonically decreasing, with a 
very small dissipation error. The dissipated energy at time $t=8$ for the 
scheme on the coarse mesh is about $2\times 10^{-3}$, 
while that on the fine mesh is about $2\times 10^{-4}$.
The dissipation in enstrophy is more severe, where we also 
observe a fluctuation, probably due to 
the fact that vorticity $\omega_h$ is a derived variable from the 
velocity computation. Our results are qualitatively similar to those obtained in \cite{LiuShu00}
that use a vorticity-stream function formulation, with roughly a similar computational cost.

\begin{figure}[ht!]
 \caption{Example 2: the computational mesh.
 Left: coarse mesh. Right: fine mesh.
 }
 \label{fig:ds}
 \includegraphics[width=.45\textwidth]{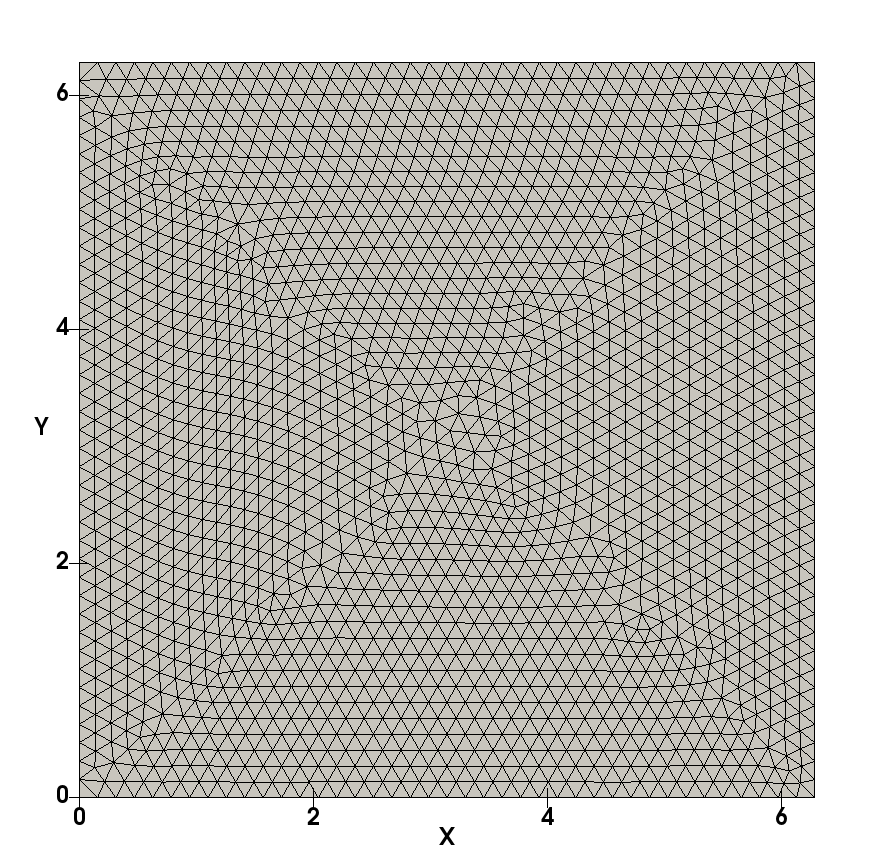}
 \includegraphics[width=.45\textwidth]{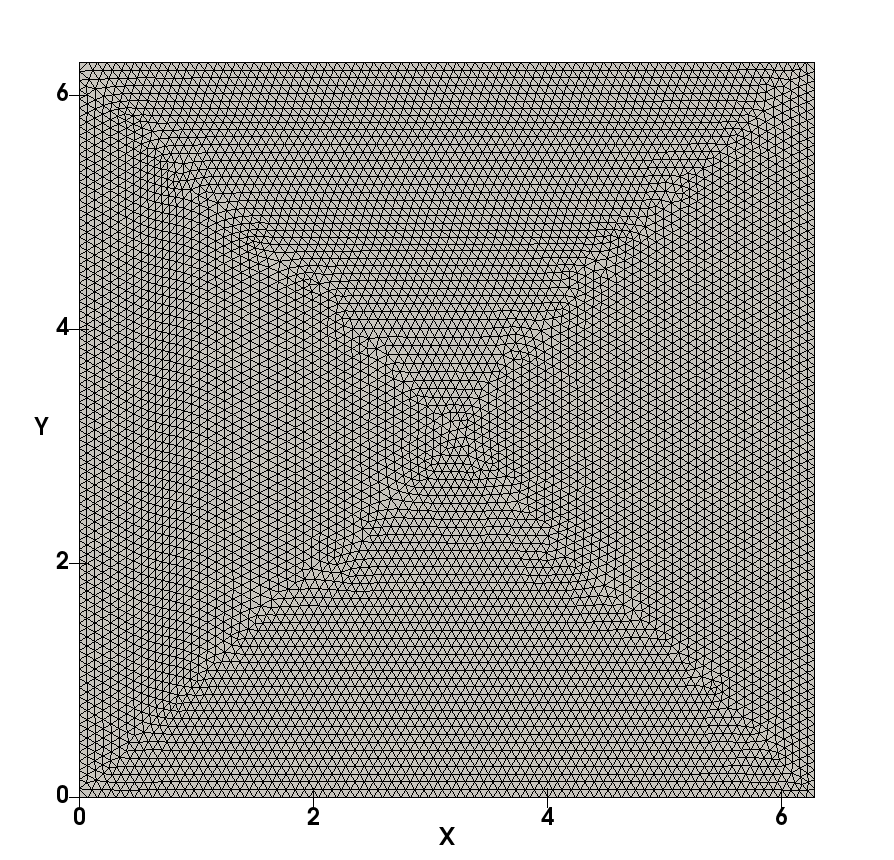}
\end{figure}

\begin{figure}[ht!]
 \caption{Example 2: the time history of energy and enstrophy.
%  Results on coarse mesh in black dashed line.
%  Results on fine mesh in black solid line.
 }
 \label{fig:ds0}
 \begin{tikzpicture} % kinetic energy
\begin{axis}[
	width=1\textwidth,
	height=0.25\textheight,
   	ylabel={$||u_h(t) ||_0^2$},
   	yticklabel style={/pgf/number format/fixed,/pgf/number format/precision=4},   	
   	every axis plot/.append style={line width=2pt, smooth},
   	no markers,   	
   	legend style={at={(0.02,0.02)},anchor=south west}
	]
\addplot table[x index=0, y index=1]{dsp3c.csv};
\addplot table[x index=0, y index=1]{dsp3f.csv};
\addlegendentry{$P^3, h = 2\pi/40$}
\addlegendentry{$P^3, h = 2\pi/80$}
\end{axis}
\end{tikzpicture}

\begin{tikzpicture} % enstrophy
\begin{axis}[
	width=1\textwidth,
	height=0.25\textheight,
	xlabel={time},
   	ylabel={$||\omega_h(t) ||_0^2$},
   	every axis plot/.append style={line width=2pt, smooth},
   	no markers,   	
   	legend style={at={(0.02,0.02)},anchor=south west}
	]
\addplot table[x index=0, y index=2]{dsp3c.csv};
\addplot table[x index=0, y index=2]{dsp3f.csv};
\addlegendentry{$P^3, h = 2\pi/40$}
\addlegendentry{$P^3, h = 2\pi/80$}
\end{axis}
\end{tikzpicture}
\end{figure}

\begin{figure}[ht!]
 \caption{Example 2: Contour of vorticity.
 30 equally spaced contour lines between $-4.9$ and $4.9$.
 Left: results on the coarse mesh; right: results on the fine mesh.
 Top: $t=6$; bottom: $t=8$.
 $P^3$ approximation.
}
 \label{fig:ds1}
 \includegraphics[width=.48\textwidth, height=0.48\textwidth]{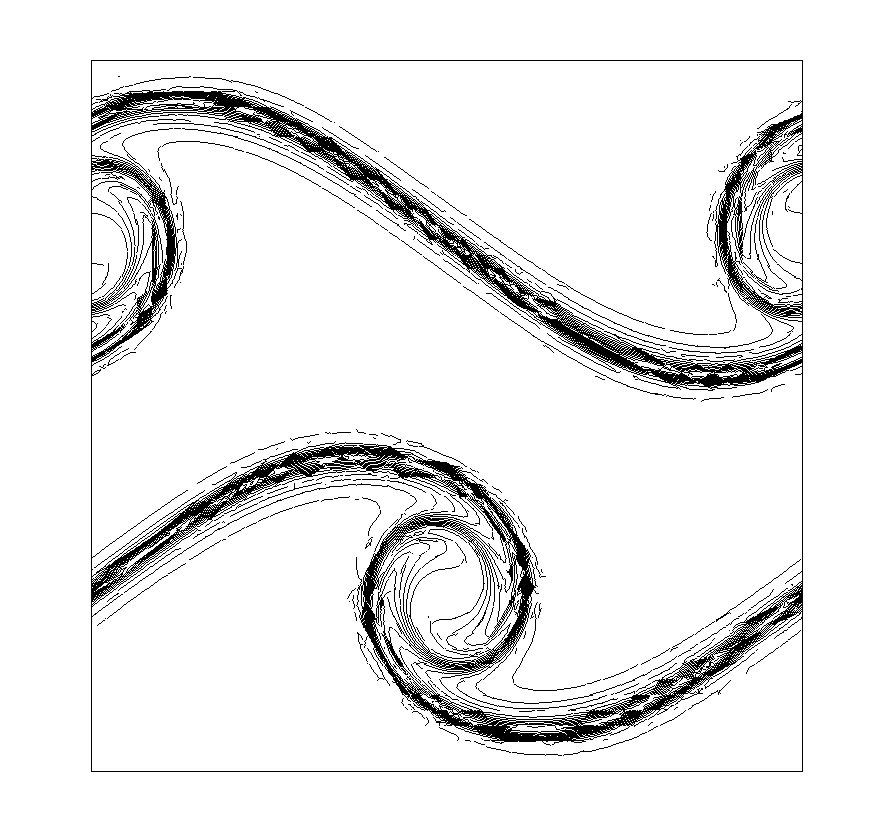}
 \includegraphics[width=.48\textwidth, height=0.48\textwidth]{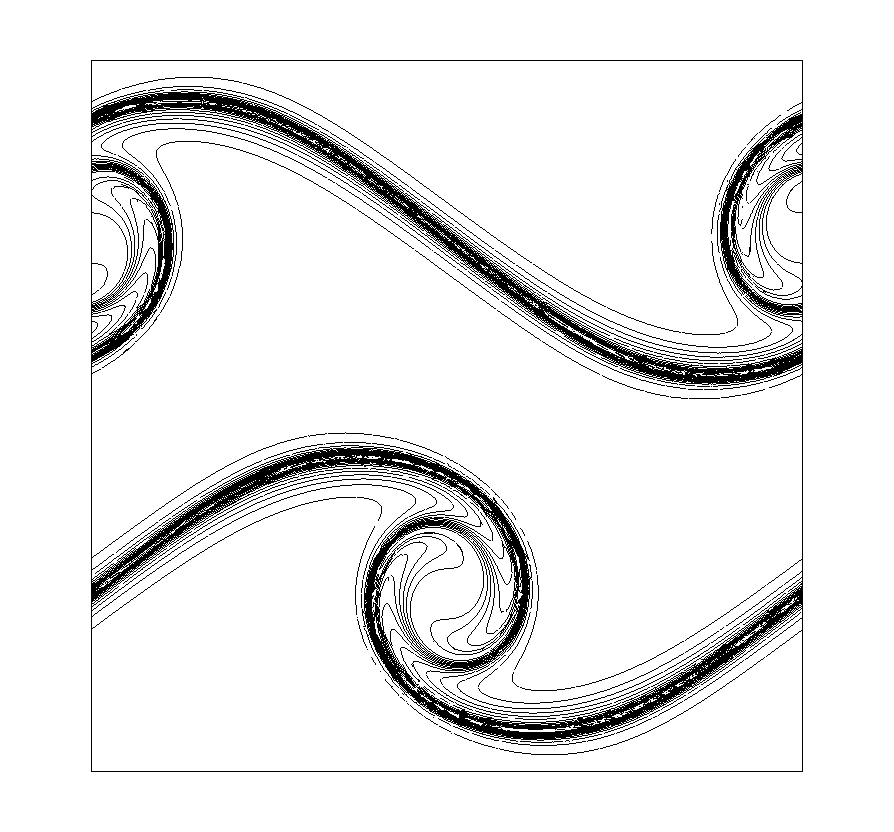}\\[.2ex]
 \includegraphics[width=.48\textwidth, height=0.48\textwidth]{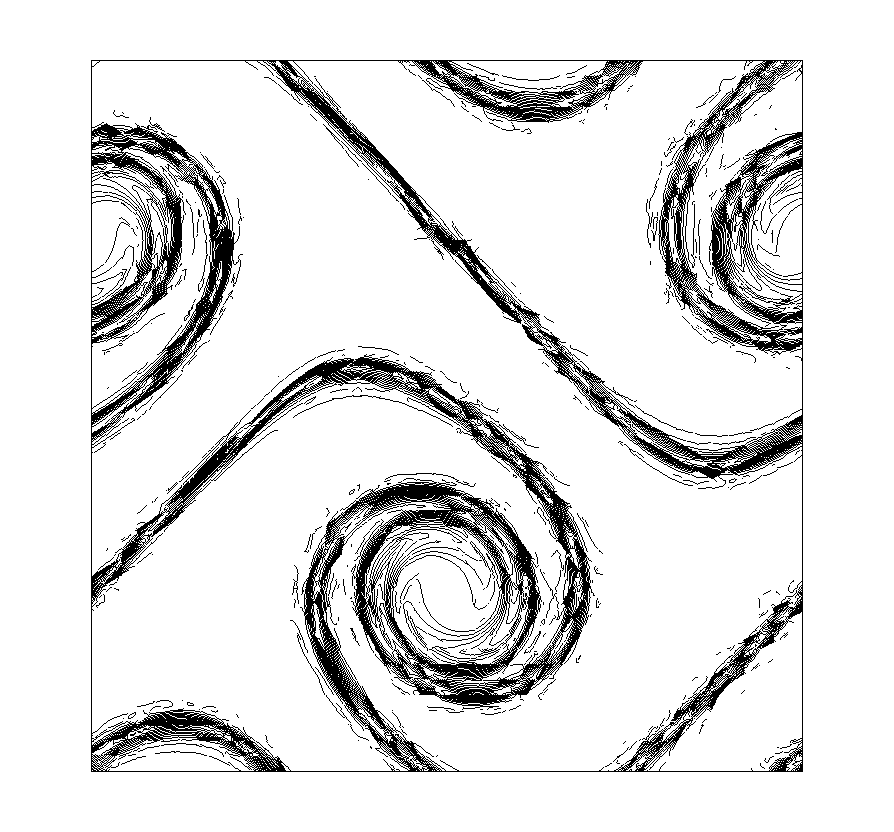}
 \includegraphics[width=.48\textwidth, height=0.48\textwidth]{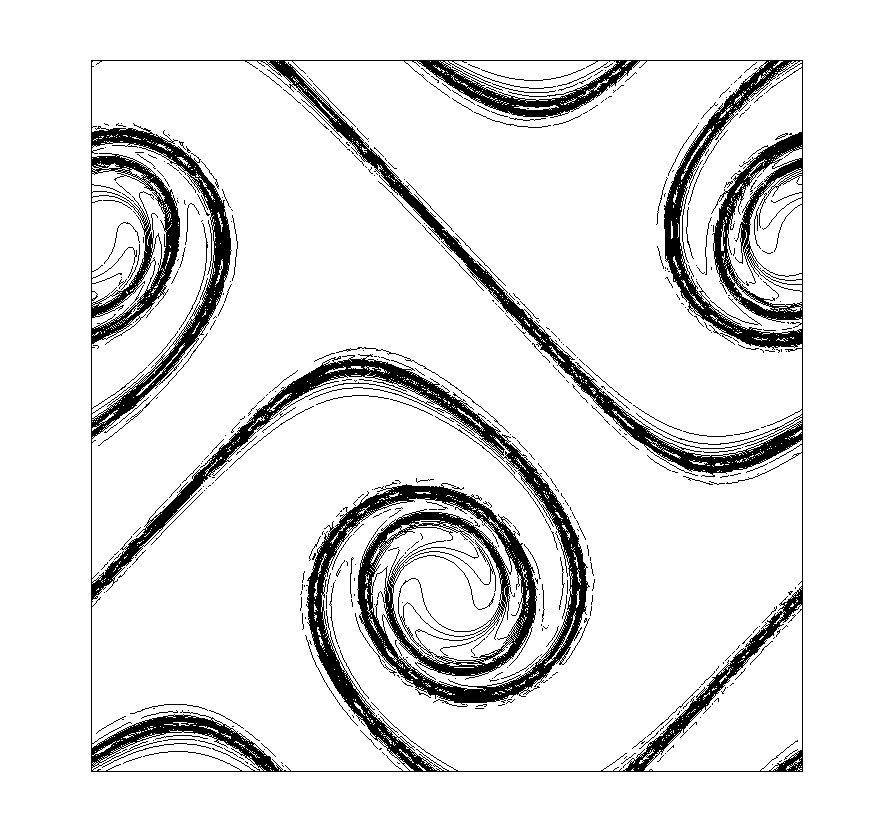}\\
\end{figure}

\subsection*{Example 3: \gfu{Kelvin}-Helmholtz instability problem}
We consider the \gfu{Kelvin}-Helmholtz instability problem, the set-up is
taken from \cite{Lehrenfeld18}.
The Navier-Stokes equations \eqref{ns} with Reynolds number $\mathrm{Re}=10000$ 
on the domain $[0,1]\times [0,1]$ with a periodic boundary condition on the $x$-direction, and 
the free-slip boundary condition 
$\bld u\cdot \bld n=0$, $\nu((\nabla\bld u)\bld n)\times\bld n=0$
at $y=0$ and $y=1$.
The initial conditions are taken to be 
\begin{align*}
 u_1(x,y,0) =&\; u_{\infty}\mathrm{tanh((2y-1)/\delta_0)}
 +c_n\partial_y \psi(x,y),\\
 u_2(x,y,0) =&\; -c_n\partial_x \psi(x,y),
\end{align*}
with corresponding stream function
\[
 \psi(x,y) = u_\infty\exp\left(-(y-.5)^2/\delta_0^2\right)[\cos(5\pi x)+\cos(20\pi x)].
\]
Here, $\delta_0=1/28$ denotes the vorticity thickness, $u_\infty=1$ is a reference velocity
and $c_n = 10^{-3}$ is a scaling/noise factor.
The scaled time $\bar t =\delta_0/u_\infty t$ is introduced.

We use a $P^4$ scheme \eqref{scheme-ns} with TVD-RK3 time stepping on an unstructured 
triangular mesh with mesh size $h=1/80$. The time step size is taken to be 
$\Delta t = \delta_0\times 10^{-2}$.
We run the simulation till time $\bar t = 400$ (a total of $40,000$ time steps).
The computation is performed on a desktop machine with 2 dual core CPUs, and 
about $20$ hours wall clock time is used for the overall simulation.

The time evolution of vortices
are shown in Fig.~\ref{fig:kh} up to time $\bar t=200$.
In the first row, the transition
from the initial condition to the four primary vortices is shown. 
The four vortices are unstable in the sense that they have the tendency to merge. This is a
well-known property of two-dimensional flows
for which energy is transferred from
the small to the large scales. 
We observe the second merging process is completed at around 
$\bar t = 56$, while the last merging process completed around 
$\bar t = 160$, and at time $\bar t=200$ a single vortex is left.
Comparing with the reference data \cite{Lehrenfeld18}, computed using
an IMEX SBDF2, 
 $P^8$ divergence-conforming HDG scheme \gfu{\cite{Lehrenfeld18}} on 
a $256\times 256$ uniform square mesh with time step 
size $\Delta t = \delta_0\times 10^{-3} \approx
3.6\times 10^{-5}$, 
we observe quite a good agreement of the vorticity dynamics up to time $\bar t=56$ where 
the second merging process is completed.
However, the numerical results in \cite{Lehrenfeld18} show that 
the last merging appears in a much later time, at around $\bar t=250$.
The numerical dissipation in our simulation 
triggered the last vortex merging in a much earlier time,
since we use a lower order method on a coarser mesh compared with \cite{Lehrenfeld18}.
We notice that a numerical simulation at the scale of \cite{Lehrenfeld18} is 
out of reach for our desktop-based simulation.
% However, notice that the scheme \cite{Lehrenfeld18} involves one Stokes 
% solver per time step while
% ours involve one (hybrid-)mixed Poisson per time step. 
% Hence, 
% using the 
% same order approximation on the same mesh, 
% our scheme shall be faster than that used in \cite{LehrenfeldSchoberl16} for the current 
% high-Reynolds number flow simulation.

\begin{figure}[ht!]
 \caption{Example 3: Contour of vorticity $\omega_h:=\nabla_h\times \bld u_h$ 
 at (from left to right and top to bottom) time $\bar t=\{5,10,17,34,56,80,120,160,200\}$.
%  Contour color from $-63.3$ to $0$.
}
 \label{fig:kh}
 \includegraphics[width=.32\textwidth]{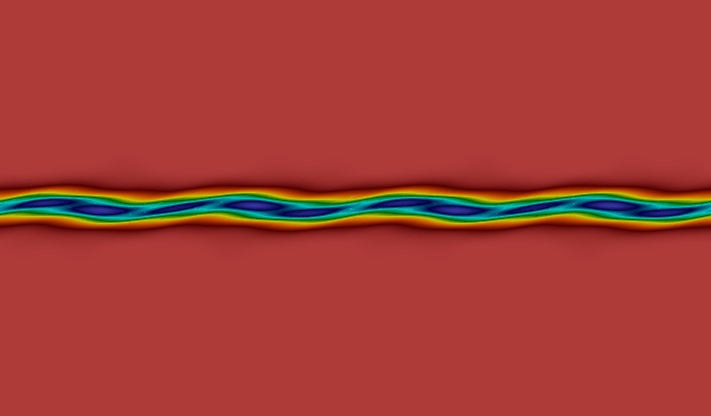}
 \includegraphics[width=.32\textwidth]{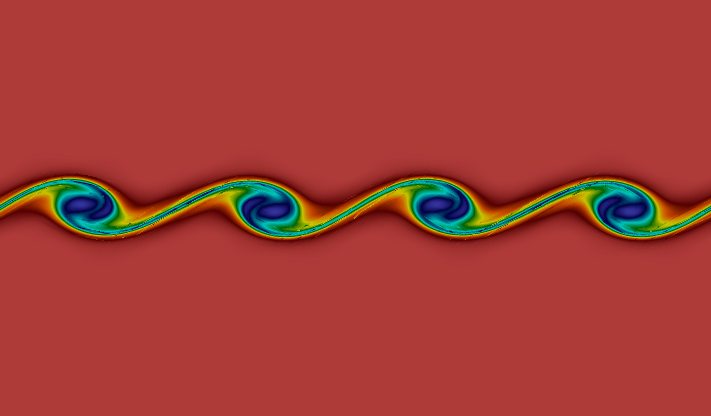}
 \includegraphics[width=.32\textwidth]{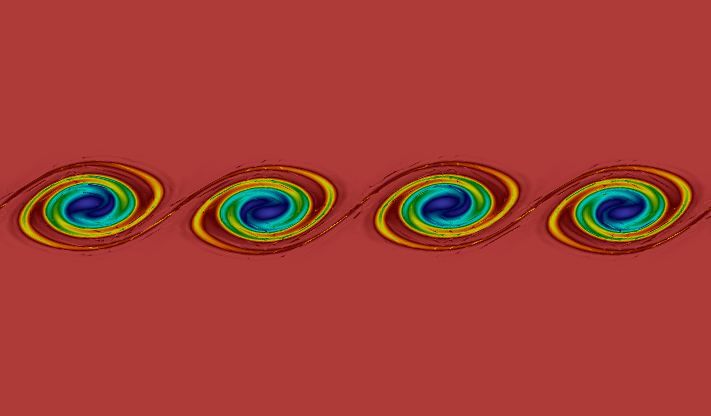}\\[.3ex]
 \includegraphics[width=.32\textwidth]{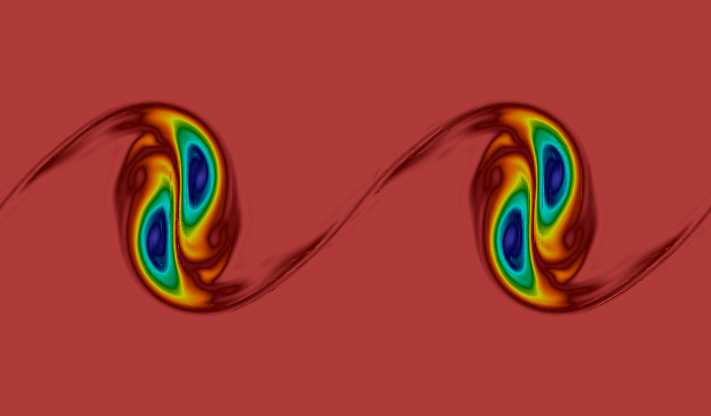}
 \includegraphics[width=.32\textwidth]{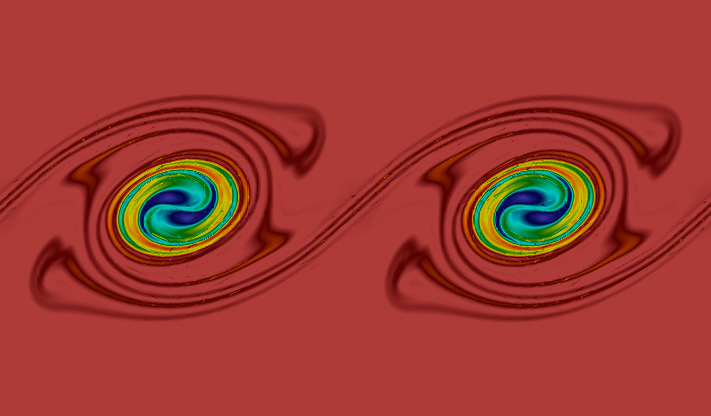}
 \includegraphics[width=.32\textwidth]{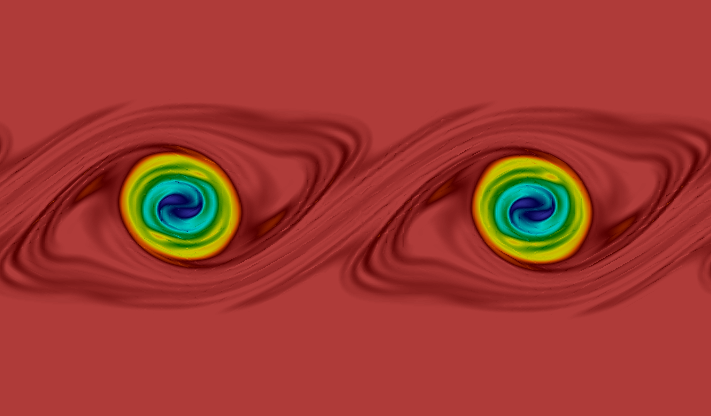}\\[.3ex]
 \includegraphics[width=.32\textwidth]{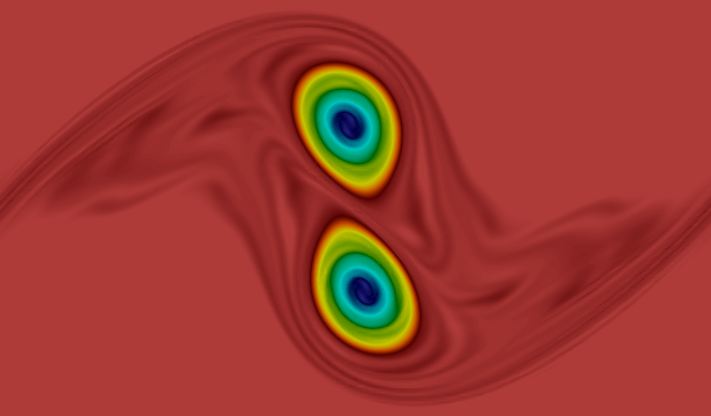}
 \includegraphics[width=.32\textwidth]{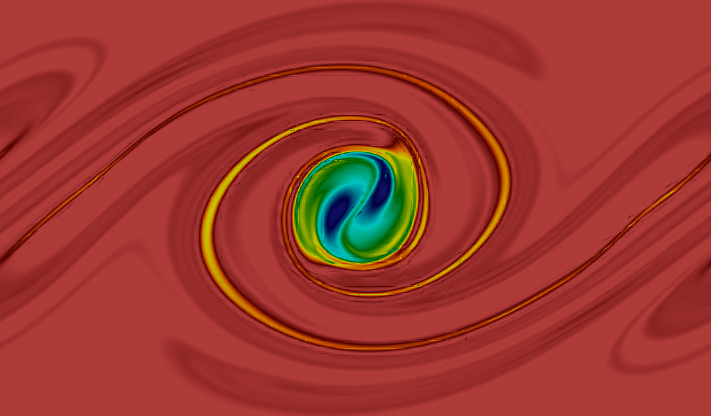}
 \includegraphics[width=.32\textwidth]{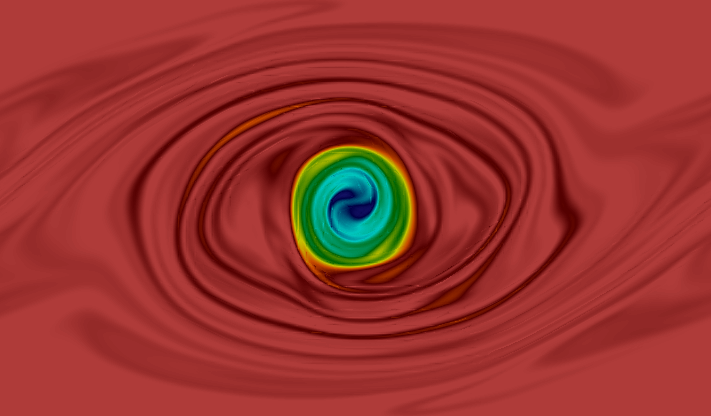}\\[.3ex]
 \includegraphics[width=.6\textwidth]{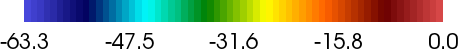}
\end{figure}

In Fig.~\ref{fig:kh2}, we plot the evolution of kinetic energy and enstrophy of our simulation, 
together with the reference data provided in \cite{Lehrenfeld18}.
A good agreement of the kinetic energy can be clearly seen, while the enstrophy
agrees pretty well till time $\bar t=150$, where the last vortex merging toke place for our 
simulation, while that happens at a much later time $\bar t=250$ for the 
scheme used in \cite{Lehrenfeld18}.
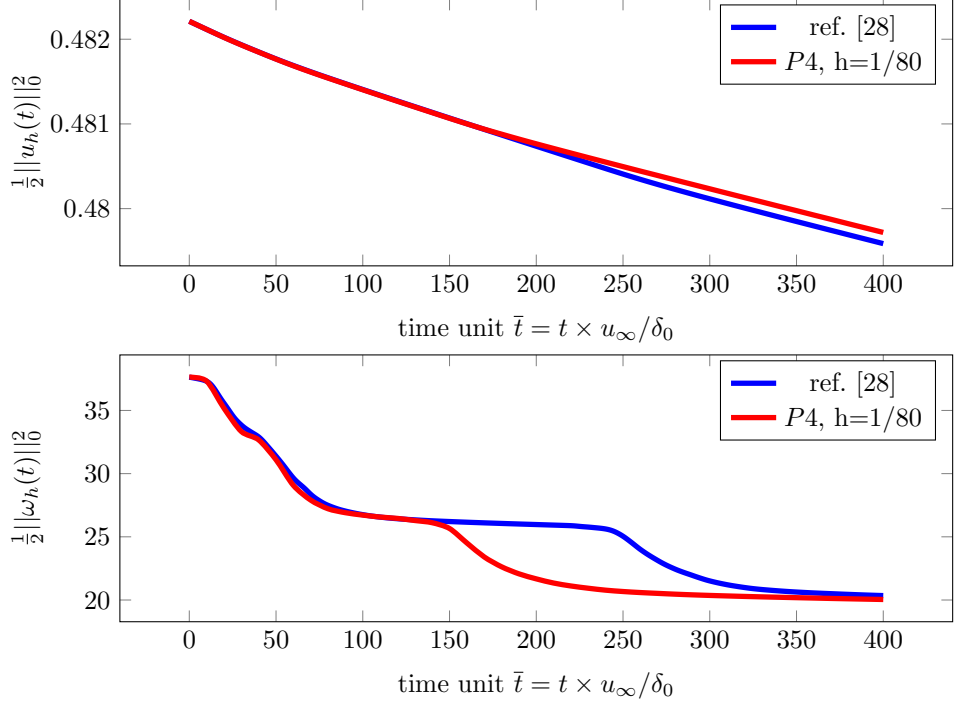
\begin{figure}
 \caption{Example 3: 
 the time history of energy and enstrophy.
% Blue line: reference data from \cite{Lehrenfeld18}.
% Red line: $P^4$ scheme on a triangular mesh with mesh size $h=1/80$. 
% with TVD-RK3 time stepping and a time step size 
% $\Delta t = \delta_0\times 10^{-2}\approx 3.6\times 10^{-4}$.
% The reference data \cite{Lehrenfeld18} is computed using an IMEX SBDF2, 
%  $P^8$ divergence-conforming HDG scheme \cite{LehrenfeldSchoberl16} on 
% a $256\times 256$ uniform square mesh with time step 
% size $\Delta t = \delta_0\times 10^{-3} \approx
% 3.6\times 10^{-5}$.
}
 \label{fig:kh2}
\begin{tikzpicture} % kinetic energy
\begin{axis}[
	width=1\textwidth,
	height=0.25\textheight,
	xlabel={time unit $\overline{t}=t \times u_\infty / \delta_0$},
   	ylabel={$\frac{1}{2} ||u_h(t) ||_0^2$},
   	yticklabel style={/pgf/number format/fixed,/pgf/number format/precision=4},   	
   	every axis plot/.append style={line width=2pt, smooth},
   	no markers,   	
	]
\addplot table[x index=1, y index=2]{ref.csv};
\addplot table[x index=0, y index=1]{data.csv};
\addlegendentry{ref. \cite{Lehrenfeld18}}
\addlegendentry{$P4$, h=1/80}
\end{axis}
\end{tikzpicture}

\begin{tikzpicture} % enstrophy
\begin{axis}[
	width=1\textwidth,
	height=0.25\textheight,
	xlabel={time unit $\overline{t}=t \times u_\infty / \delta_0$},
   	ylabel={$\frac{1}{2} ||\omega_h(t) ||_0^2$},
   	every axis plot/.append style={line width=2pt, smooth},
   	no markers,   	
	]
\addplot table[x index=1, y index=3]{ref.csv};
\addplot table[x index=0, y index=2]{data.csv};
\addlegendentry{ref. \cite{Lehrenfeld18}}
\addlegendentry{$P4$, h=1/80}
\end{axis}
\end{tikzpicture}
\end{figure}

% \clearpage
\subsection*{Example 4: flow around a cylinder}
We consider the 2D-2 benchmark problem proposed in \cite{LD96} where a laminar flow around a 
cylinder is considered. 
The domain is a rectangular channel without an almost vertically centered circular obstacle, 
c.f. Fig.~\ref{fig:obs},
\[
 \Omega:=[0,2.2]\times [0,0.41]\backslash \{\|(x,y)-(0.2,0.2)\|_2\le 0.05\}.
\]
The boundary is decomposed into $\Gamma_{{in}}:=\{x=0\}$, the inflow boundary, 
$\Gamma_{{out}}:=\{x=2.2\}$, the outflow boundary, and 
$\Gamma_{{wall}}:=\partial\Omega\backslash(\Gamma_{{in}}\cup \Gamma_{{out}})$,
the wall boundary. On $\Gamma_{{out}}$ we prescribe 
natural boundary conditions
$(-\nu\nabla\bld u + p I) \bld n = 0$, on 
$\Gamma_{{wall}}$ homogeneous Dirichlet boundary conditions for the velocity (no-slip) 
and on $\Gamma_{{in}}$  the
inflow Dirichlet boundary conditions
\[\bld u(0, y, t) = 6\bar u\, y(0.41 - y)/0.41^2 \cdot (1, 0),
\]
with $\bar u=1$ the average inflow velocity.
The viscosity is taken to be $\nu = 10^{-3}$, hence Reynolds number $\mathrm{Re}=\bar u D/\nu = 100$,
where $D=0.1$ is the disc diameter.

The quantities of interest in this example are the (maximal and minimal) drag and lift 
forces $cD$ , $cL$ that
act on the disc. These are defined as
\[
 [c_D, c_L] =\frac{1}{\bar u^2 r}\int_{\Gamma_o}(\nu\nabla \bld u-pI)\bld n \mathrm{ds},
\]
where $r=0.05$ is the radius of the obstacle, and $\Gamma_o$ denotes the surface of the obstacle.

We use a (curved) unstructured triangular grid around the disk. In Fig. \ref{fig:obs} the geometry, 
the mesh and a typical solution is depicted. The final time of the simulation is taken to be $t=8$.
The mesh consists of 488 triangular elements with mesh size 
$h\approx 0.013$  around the circle (24 uniformly spaced nodes on the circle), 
and $h\approx 0.08$ away from the circle. 
We run the simulation on this mesh 
using polynomial degree form $2$ to $4$.
% as used in the divergence-conforming IMEX-HDG scheme \cite{LehrenfeldSchoberl16}.
The maximal/minimal drag and lift coefficients are lists in Table \ref{table:ld}, where
the local dofs refer to those for velocity and pressure, while the global 
dofs refer to those for the Lagrange multiplier on the mesh skeleton.
As a reference, we also show the results obtained by {\sf FEATFLOW} \cite{FE}
using a $Q^2/P^{1,\mathrm{disc}}$ quadrilaterial element.
Clearly we observe a rapid convergence as the polynomial degree increases.
Compared with the (low-order) results form the literature \cite{FE}, 
the same accuracy is achieved
with a lot less degrees of freedom.
Similar observation was also found 
for the the divergence-conforming IMEX-HDG scheme \cite{LehrenfeldSchoberl16}.
% Finally, we 
% remark that, 
% comparing with the IMEX-HDG scheme \cite{LehrenfeldSchoberl16}
% that treat the viscous part implicitly,
% our scheme also enjoys a significant amount of \gfu{global} degrees of freedom saving
% thanks to the use of a hybrid-mixed Poisson solver as opposite to an
% HDG Stokes solver.

\begin{figure}[ht!]
 \caption{Example 4: Sketch of the mesh and the solution using 
 the $P^4$ scheme
 (color corresponding to velocity magnitude
 $\|\bld u\|_2$).}
 \label{fig:obs}
 \includegraphics[width=.75\textwidth]{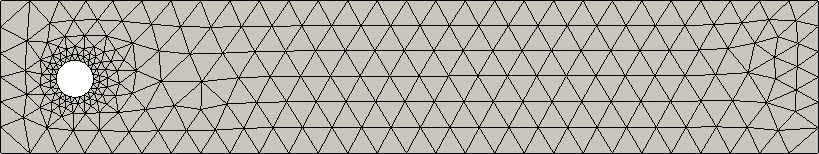}\\[.3ex]
 \includegraphics[width=.75\textwidth]{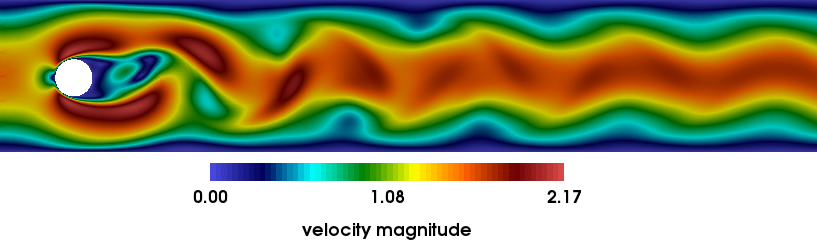}
\end{figure}

\begin{table}[ht!]
\caption{Example 4: Maximal/minimal values of lift and drag coefficients: 
results for different polynomial degree.} % title of Table
\centering % used for centering table
% \resizebox{0.85\columnwidth}{!}
{%
\begin{tabular}{ crr cccc}
\toprule
  &\begin{tabular}{c}
    $\#$dof\\
    local
   \end{tabular} & 
   \begin{tabular}{c}
    $\#$dof\\
    global
   \end{tabular}& 
  $\max c_D$
& 
  $\min c_D$
  & 
  $\max c_L$
  & 
  $\min c_L$%           \\
\tabularnewline
\midrule
{k=2} 
& 5 368& 2 316& 
3.132939 & 3.074858 & 0.935284 & -0.884771
\\ [1.5ex]
{k=3} 
&7 808& 3 088  &
3.229686 & 3.170424 & 0.969323 & -0.965982
\\ [1.5ex]
{k=4} 
&10 736& 3 860  &
3.226865 & 3.163545 & 0.986497 & -1.018691
\\ [1.5ex]
% {k=5} 
% &14 152 & 4 632  
% & 3.218978 & 3.156253 & 0.978113 & -1.017660
% \\ [1.5ex]
\midrule
\multirow{2}{1mm}{ref. \cite{FE}} 
&-& 167 232
&3.22662& 3.16351& 0.98620& -1.02093\\
&-&667 264& 3.22711& 3.16426& 0.98658& -1.02129
\\
\bottomrule
\end{tabular}}
\label{table:ld} % is used to refer this table in the text
\end{table}

\subsection*{Example 5: lid driven cavity at a high Reynolds number}
In our last example, we consider a lid driven cavity flow problem 
\cite{Ghia82} at a high Reynolds number $\mathrm{Re}=10,000$.
The domain is a unit square $\Omega = [0,1]\times [0,1]$, the velocity boundary 
condition is used on the boundary with $(u_1,u_2)=(1,0)$ on the top boundary $y=1$,
and $(u_1,u_2)=(0,0)$ on the other boundaries.
We use a steady-state Solver solver to generate the initial condition.
For this problem, the solution eventually reaches a steady state.
However, for such high Reynolds number flow, the numerical solution 
tend to oscillate around the steady-state without settling done 
on coarse meshes, see e.g. the discussion in \cite{Erturk09}.

We consider a uniform $32\times 32$ rectangular mesh using the 
divergence-free RT $Q^4$ velocity space. Since temporal accuracy is not of concern for this 
problem. We use the cheaper forward Euler time stepping \eqref{FE}.
The time step size is taken to be $\Delta t = 10^{-3}$.  Final time of simulation is $t=400$.
Hence, a total of $400,000$ time steps is used. 
For this problem, we have local dofs $37,888$(local velocity and pressure) 
and global dofs $10,560$(Lagrange multiplier).
The overall wall computational time is about 10 hours.

%Given the fact that we produce a divergence-free velocity field, 
%there exists a unique continuous polynomial function 
%$\phi_h\in Q_h^{k+1}\cap H^1(\Omega)$ up to a constant such that 
%$\bld u_h=\nabla^\perp \phi_h$, where 
%$\nabla^\perp$ is the rotated gradient. Hence, streamline can always be calculated via
%the following Poisson solver when desired: find $\phi_h\in Q_h^{k+1}\cap H^1(\Omega)$
%such that 
%\[
% (\nabla \phi_h, \nabla \psi_h)_\Oh = (\bld u_h, \nabla^\perp\psi_h)_\Oh,
% \quad \forall \psi_h\in Q_h^{k+1}\cap H^1(\Omega).
%\]
%We use the above Poisson solver to calculate the streamline for plotting purpose.
In Fig.~\ref{fig:ld} and Fig.~\ref{fig:ld2}, we plot the time evolution of the streamlines 
and vorticity contours. We numerically observe that starting around time 
$t=80$, the solution oscillates around the steady-state solution but never 
reaches the steady state. The $L^2$-norm of the velocity difference at two 
consecutive time levels hangs 
at around $5\times 10^{-5}$ and never drops down.
This phenomenon is probably due to the low mesh resolution ($32\times 32$ in our case).
In particular, for second-order methods, a mesh larger than $256\times 256$ 
shall be used to reach a steady-state for high Reynolds number flow ($Re>10, 000$);
see \cite{Erturk09}.
% 
% Increase the mesh resolution might help the scheme to reach a steady state, see e.g. 
% \cite{Erturk09}.
However, the main features of the small structure around the top, 
left and right corners can be  be clearly seen in 
Fig.~\ref{fig:ld} and Fig.~\ref{fig:ld2} starting at time $t=80$.
Finally, in Fig.~\ref{fig:ld0}, we plot the $x$-component of velocity field along the horizontal central line $x=0.5$, 
and the $y$-component of velocity field along the vertical central line $y=0.5$ at time $t=160,200,400$, along with the 
reference data provided in \cite{Ghia82}. A good match with the reference data is observed.

% Notice that the streamline function $\phi_h$ is determined by 
% requiring $\grad^\perp \phi_h = \bld u_h$.
\begin{figure}[ht!]
 \caption{Example 5: Streamline plots at (form left to right and top to bottom)
time  $t=\{2,4,10,20,40,80,160,200,400\}$.}
 \label{fig:ld}
 \includegraphics[width=.32\textwidth, height=0.32\textwidth]{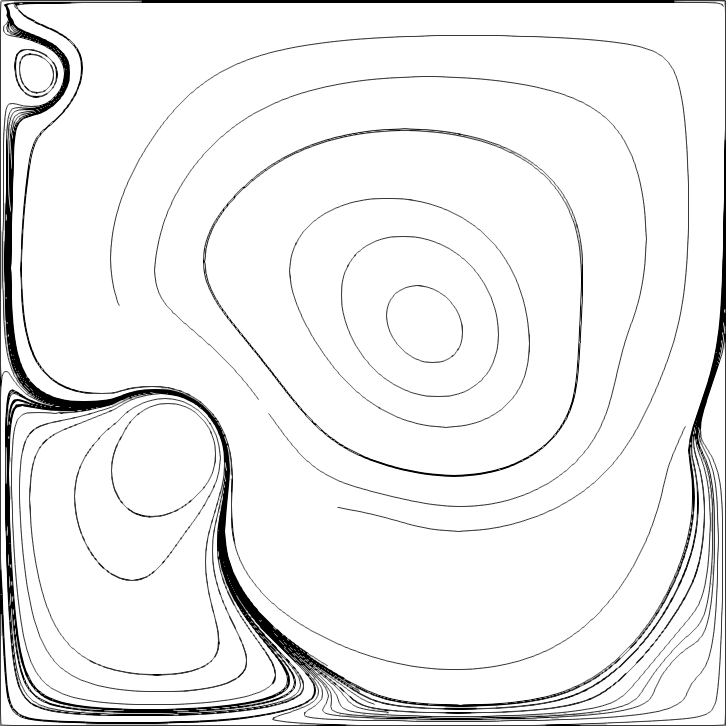}\hspace{.3ex}
 \includegraphics[width=.32\textwidth, height=0.32\textwidth]{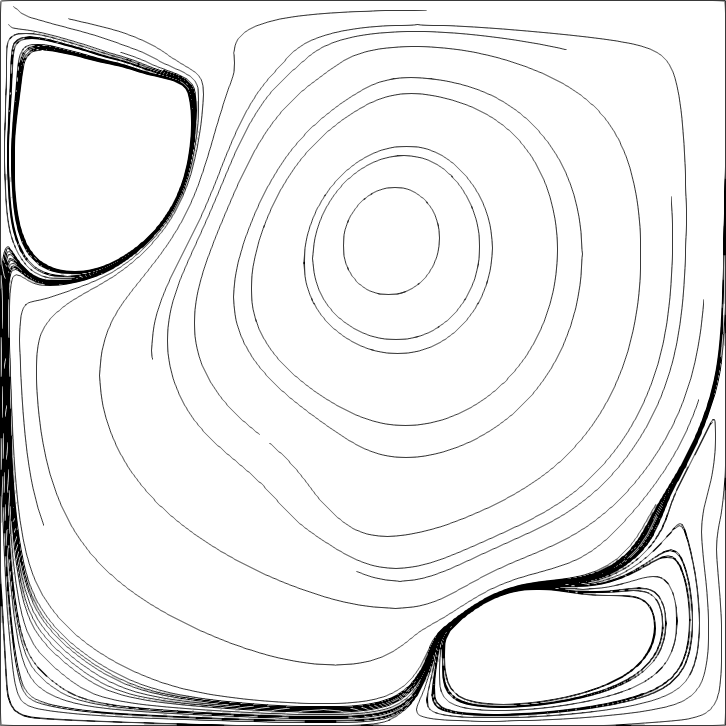}\hspace{.3ex}
 \includegraphics[width=.32\textwidth, height=0.32\textwidth]{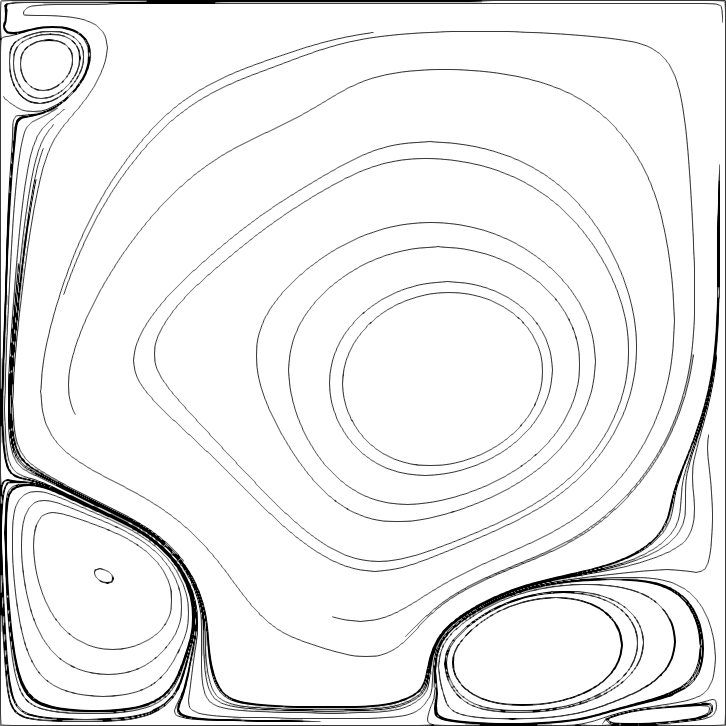}\\[.6ex]
 \includegraphics[width=.32\textwidth, height=0.32\textwidth]{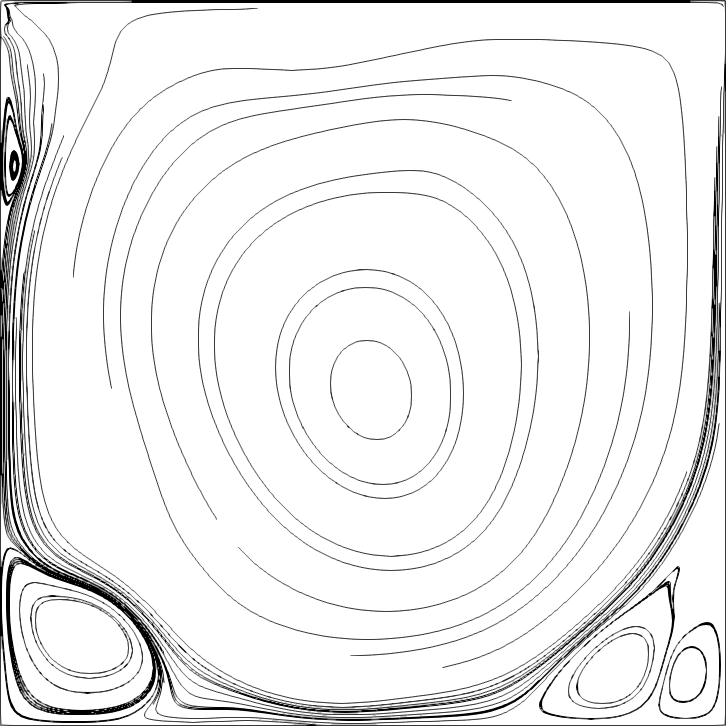}\hspace{.3ex}
 \includegraphics[width=.32\textwidth, height=0.32\textwidth]{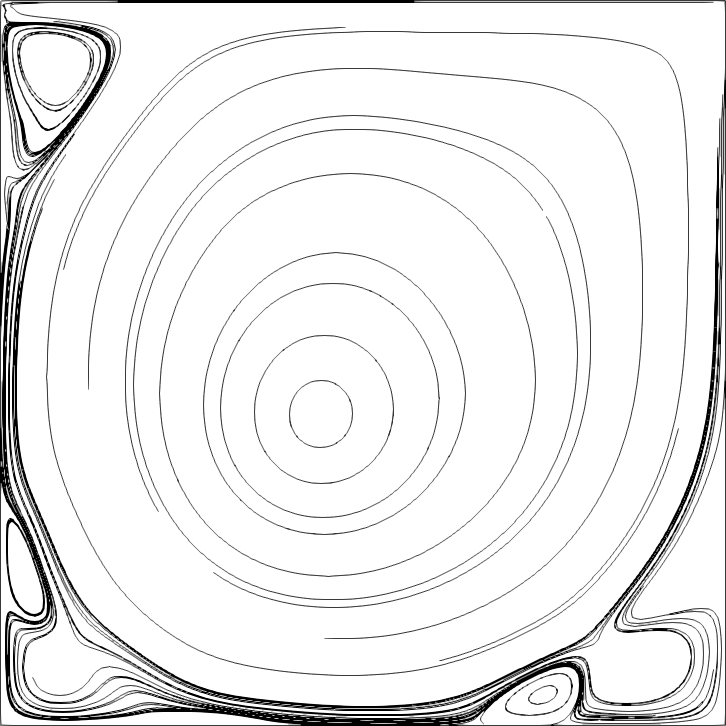}\hspace{.3ex}
 \includegraphics[width=.32\textwidth, height=0.32\textwidth]{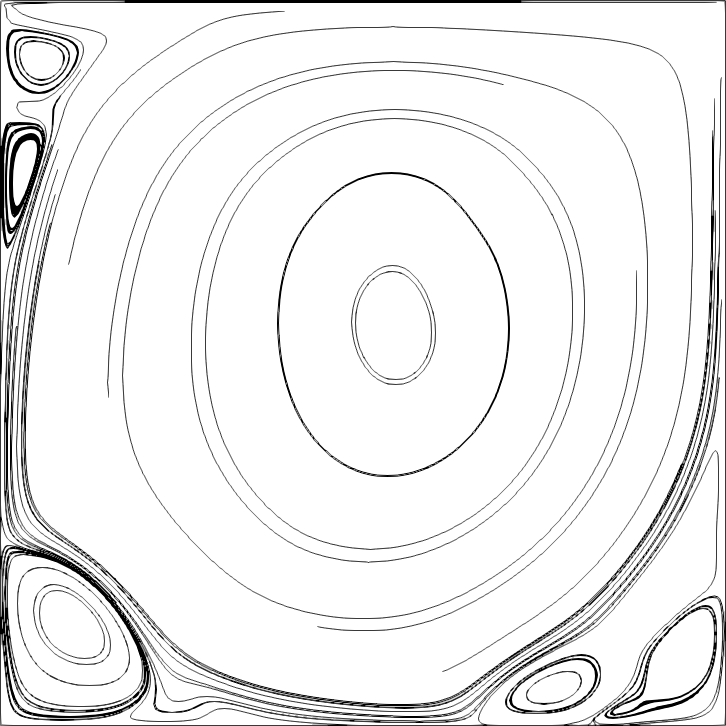}\\[.6ex]
 \includegraphics[width=.32\textwidth, height=0.32\textwidth]{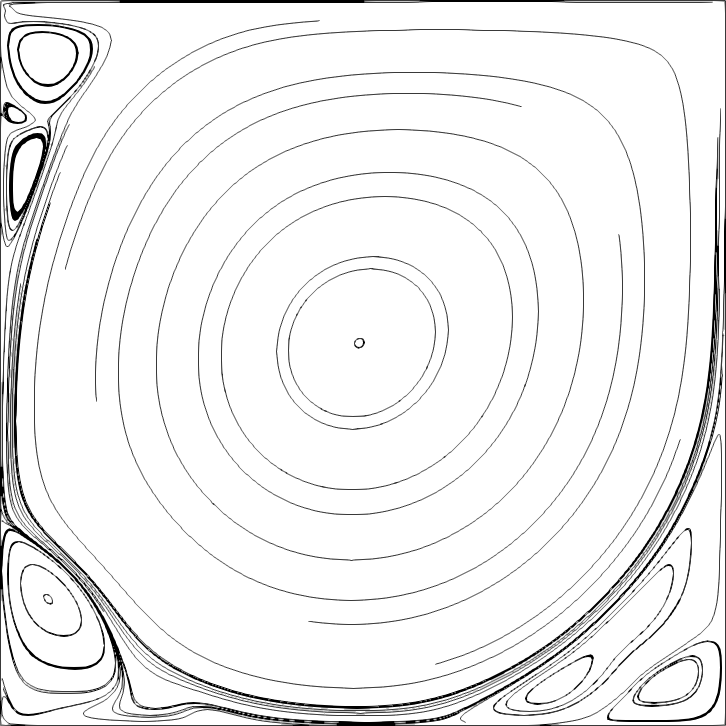}\hspace{.3ex}
 \includegraphics[width=.32\textwidth, height=0.32\textwidth]{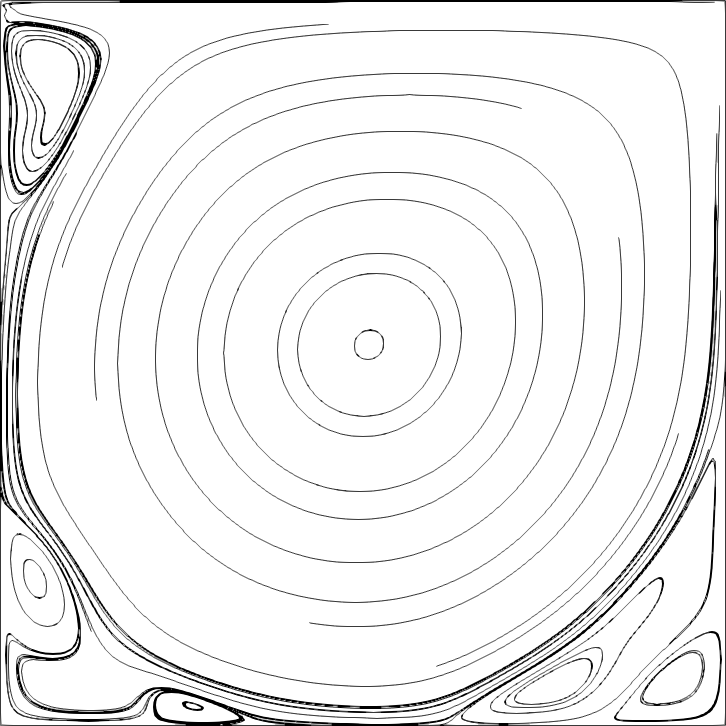}\hspace{.3ex}
 \includegraphics[width=.32\textwidth, height=0.32\textwidth]{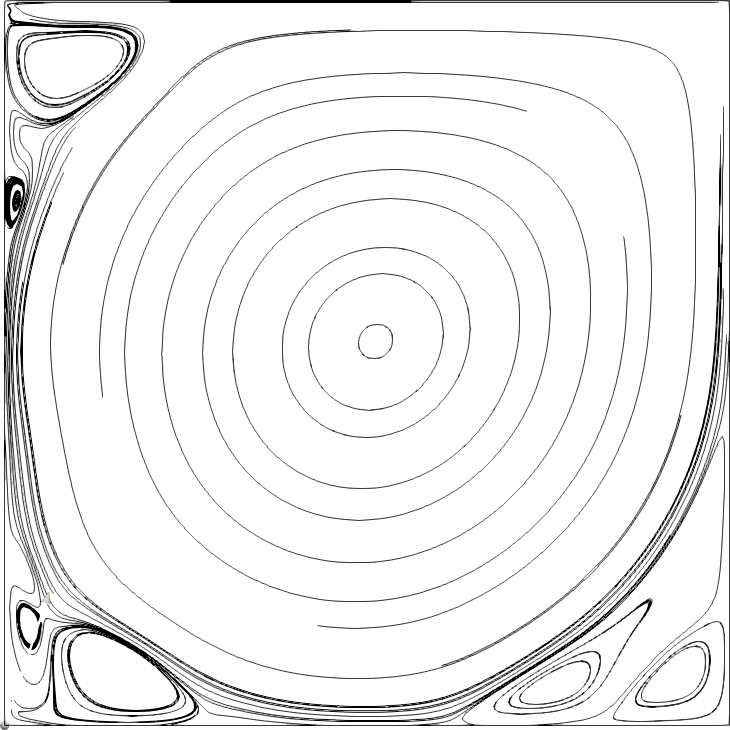}
\end{figure}

\begin{figure}[ht!]
 \caption{Example 5: Contour of vorticity
 at (form left to right and top to bottom) time $t=\{2,4,10,20,40,80,160,200,400\}$.
 30 equally spaced contours between $-1$ to $1$.
}
 \label{fig:ld2}
 \includegraphics[width=.32\textwidth, height=0.32\textwidth]{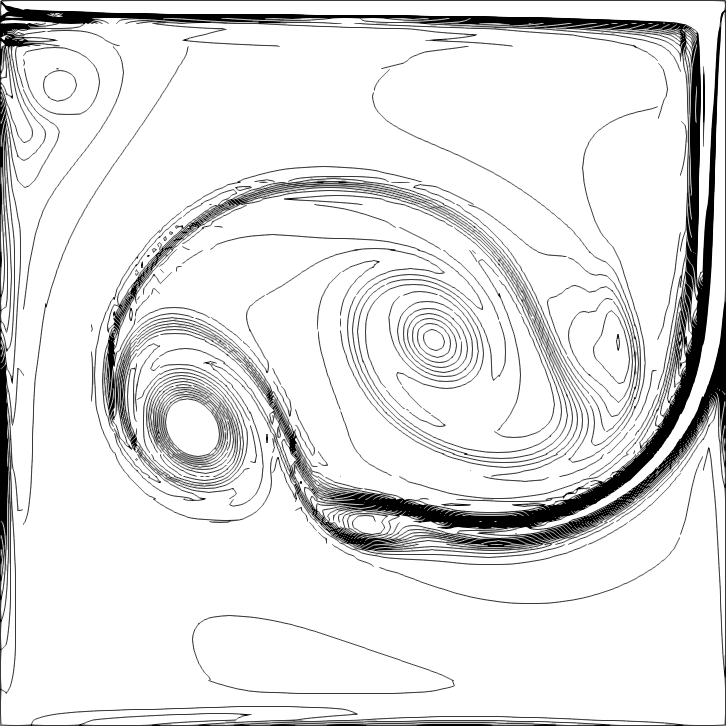}\hspace{.3ex}
 \includegraphics[width=.32\textwidth, height=0.32\textwidth]{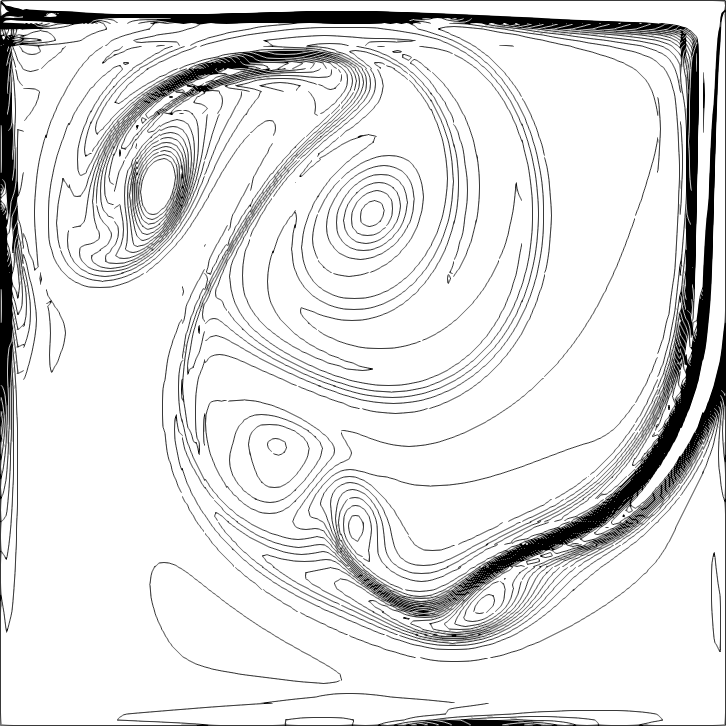}\hspace{.3ex}
 \includegraphics[width=.32\textwidth, height=0.32\textwidth]{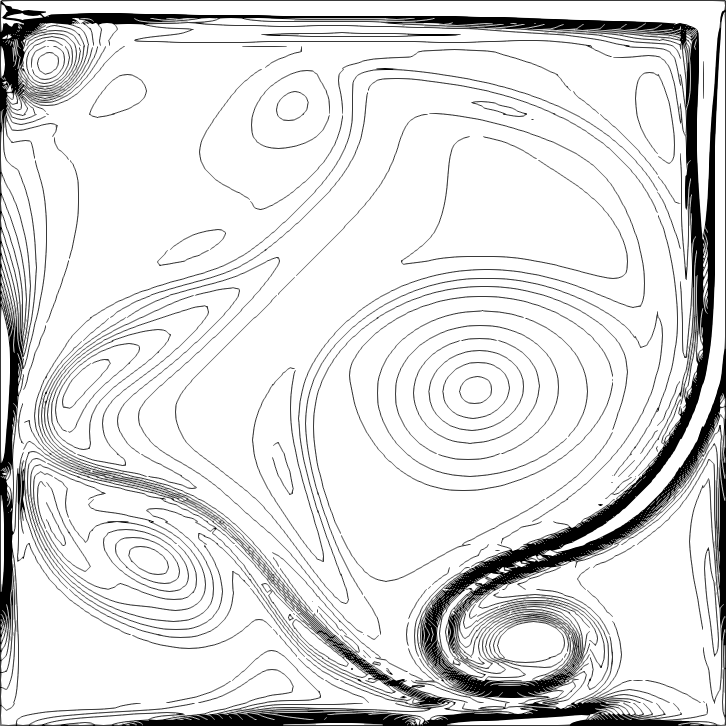}\\[.6ex]
 \includegraphics[width=.32\textwidth, height=0.32\textwidth]{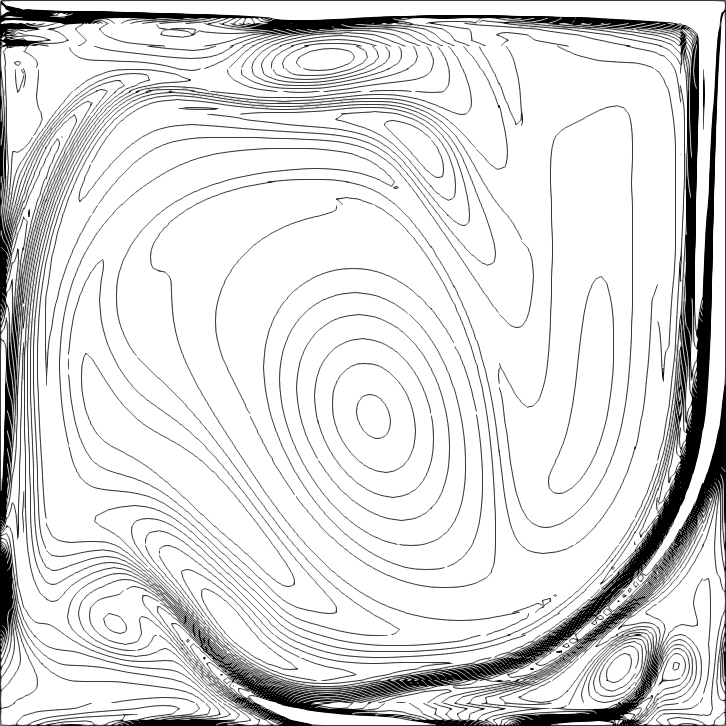}\hspace{.3ex}
 \includegraphics[width=.32\textwidth, height=0.32\textwidth]{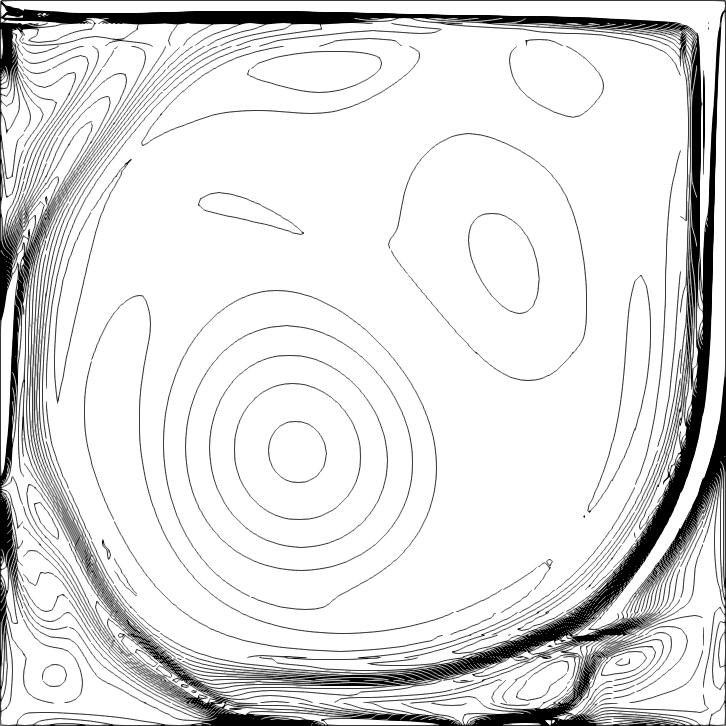}\hspace{.3ex}
 \includegraphics[width=.32\textwidth, height=0.32\textwidth]{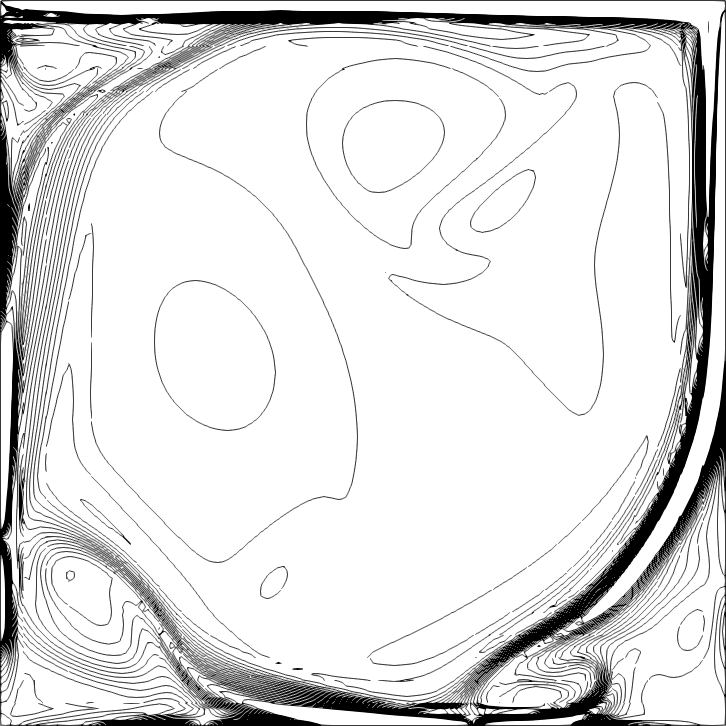}\\[.6ex]
 \includegraphics[width=.32\textwidth, height=0.32\textwidth]{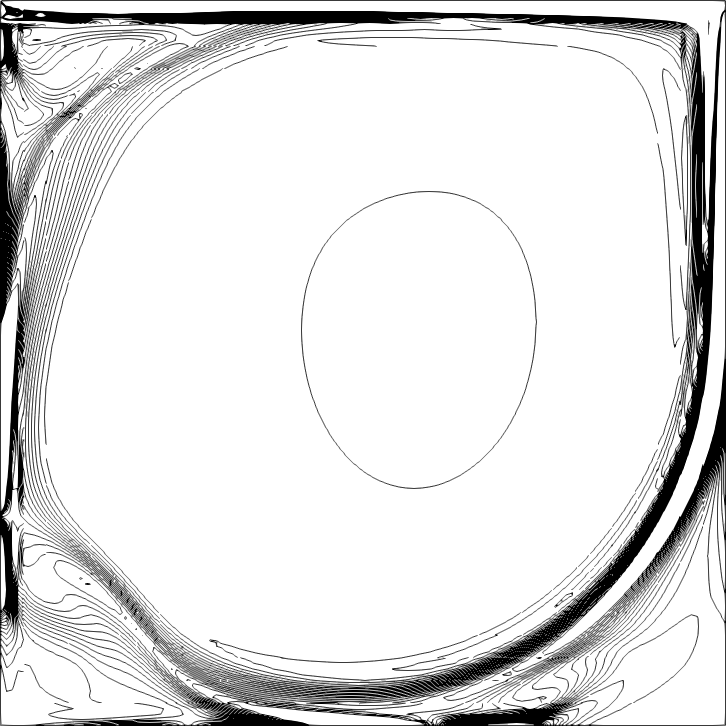}\hspace{.3ex}
 \includegraphics[width=.32\textwidth, height=0.32\textwidth]{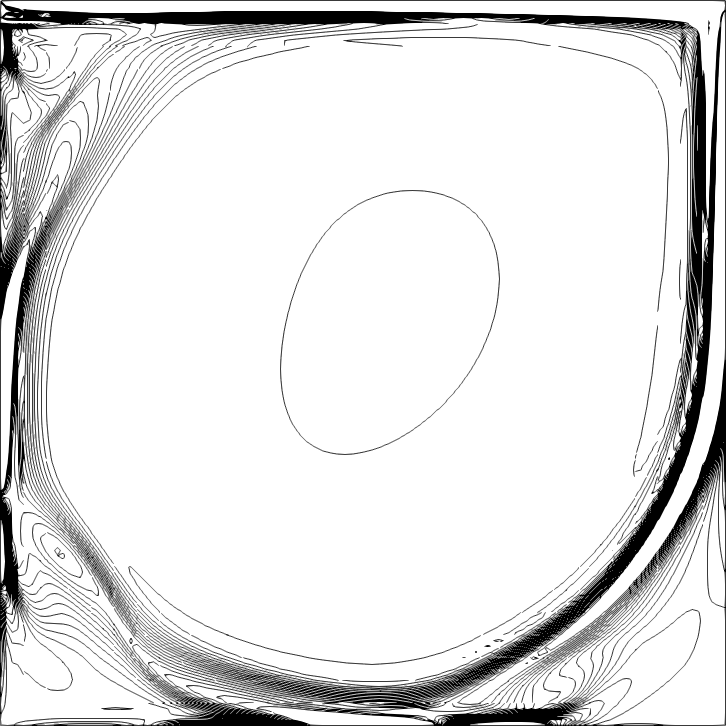}\hspace{.3ex}
 \includegraphics[width=.32\textwidth, height=0.32\textwidth]{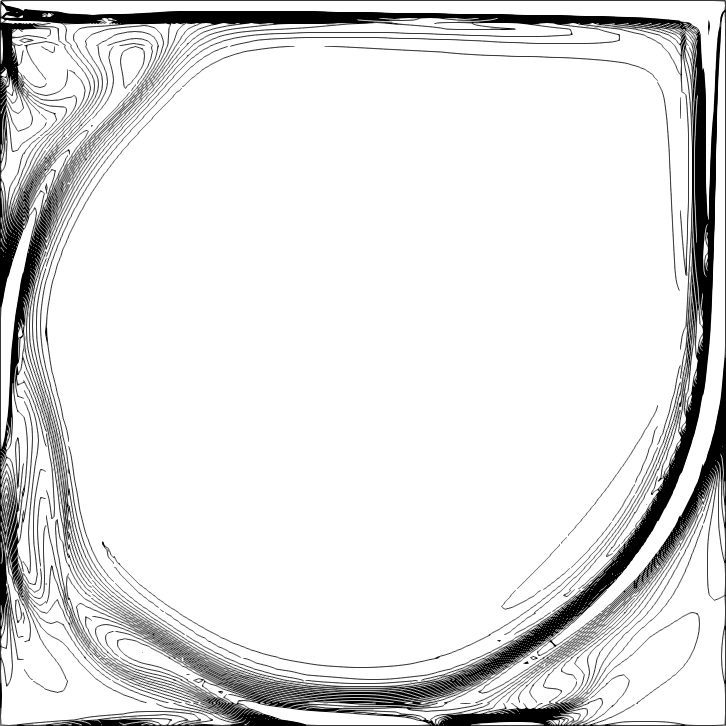}
\end{figure}
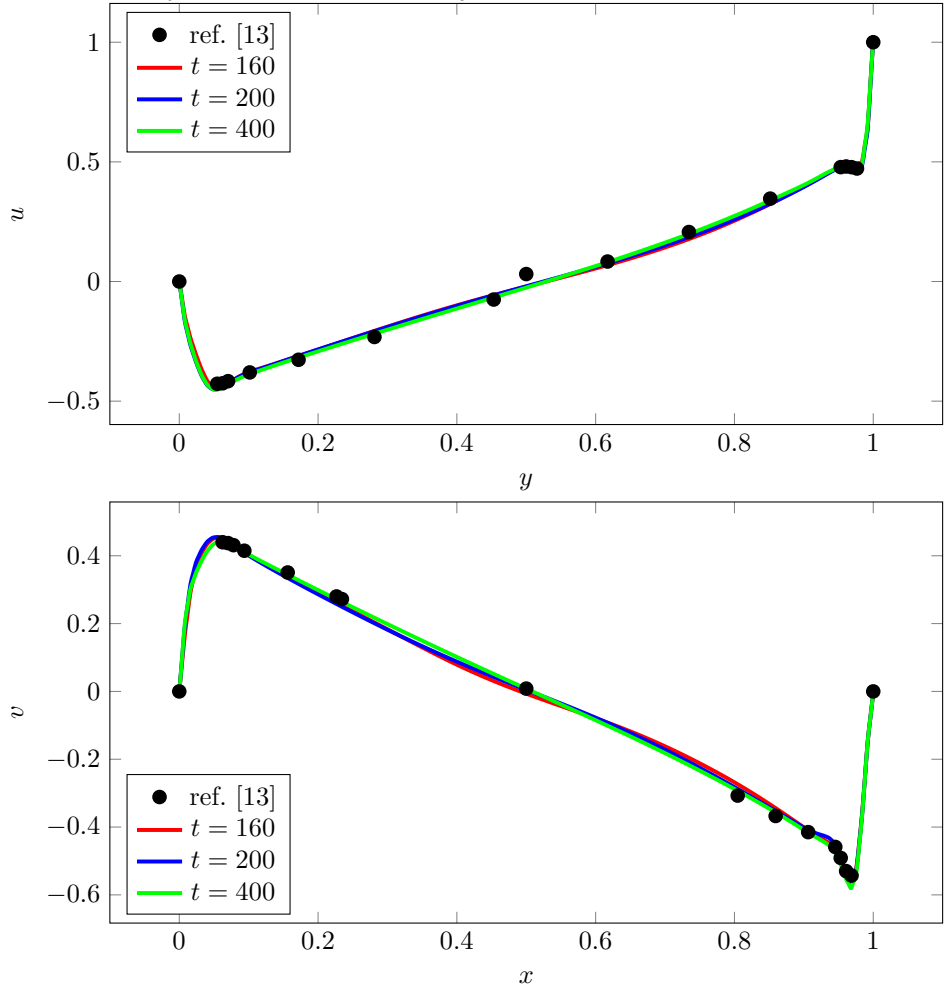
\begin{figure}[ht!]
 \caption{Example 5: velocity along cut lines. 
Top: $x$-component velocity on horizontal central line $x=0.5$;
bottom: $y$-component velocity on vertical central line $y=0.5$.
 }
 \label{fig:ld0}
 \begin{tikzpicture} % kinetic energy
\begin{axis}[
	width=1\textwidth,
	height=0.35\textheight,
 	xlabel={$y$},
   	ylabel={$u$},
   	yticklabel style={/pgf/number format/fixed,/pgf/number format/precision=4},   	
   	every axis plot/.append style={line width=1.5pt, smooth},
   	no markers,   	
   	legend style={at={(0.02,0.98)},anchor=north west}
	]
\addplot[color=black,solid,mark=*,mark options={solid},only marks] table[x index=0, y index=1]{ref_u.csv};
\addplot table[x index=0, y index=1]{u160x.csv};
\addplot[color=blue] table[x index=0, y index=1]{u200x.csv};
\addplot[color=green] table[x index=0, y index=1]{u400x.csv};
 \addlegendentry{ref. \cite{Ghia82}}
 \addlegendentry{$t=160$}
 \addlegendentry{$t=200$}
 \addlegendentry{$t=400$}
\end{axis}
\end{tikzpicture}

 \begin{tikzpicture} % kinetic energy
\begin{axis}[
	width=1\textwidth,
	height=0.35\textheight,
 	xlabel={$x$},
   	ylabel={$v$},
   	yticklabel style={/pgf/number format/fixed,/pgf/number format/precision=4},   	
   	every axis plot/.append style={line width=1.5pt, smooth},
   	no markers,   	
   	legend style={at={(0.02,0.02)},anchor=south west}
	]
\addplot[color=black,solid,mark=*,mark options={solid},only marks] table[x index=0, y index=1]{ref_v.csv};
\addplot table[x index=0, y index=1]{v160x.csv};
\addplot[color=blue] table[x index=0, y index=1]{v200x.csv};
\addplot[color=green] table[x index=0, y index=1]{v400x.csv};
 \addlegendentry{ref. \cite{Ghia82}}
 \addlegendentry{$t=160$}
 \addlegendentry{$t=200$}
 \addlegendentry{$t=400$}
\end{axis}
\end{tikzpicture}
\end{figure}

\section{Conclusion}
\label{sec:conclude}
We presented an explicit divergence-free 
DG method for incompressible flows. The key ingredient for the efficient implementation 
is the identification of the equivalence of the mass matrix inversion of the divergence-free 
finite element space and a hybrid-mixed Poisson solver.
The scheme is especially suitable for unsteady inviscid flow or viscous flow at a high Reynolds number 
flow.
% The extension of the scheme to more complicated incompressible 
% flows such as the incompressible MHD equations consists our future work.
%\bibliography{../../../BIB/all}
\bibliographystyle{siam}

\end{document}